\begin{document}

\title[Uniqueness of excited states]{Uniqueness of excited states to $-\Delta u + u -u^3=0$ in three dimensions}
\author{Alex Cohen, Zhenhao Li, Wilhelm Schlag}
\date{}

\address{Department of Mathematics \\ Massachusetts Institute of Technology \\ Cambridge, MA 02138, USA}
\email{alexcoh@mit.edu}
\address{Department of Mathematics \\ Massachusetts Institute of Technology \\ Cambridge, MA 02138, USA}
\email{zhenhao@mit.edu}
\address{Department of Mathematics \\ Yale University \\ New Haven, CT 06511, USA}
\email{wilhelm.schlag@yale.edu}

\thanks{
W.\ Schlag was partially supported by NSF grant DMS-1902691.
}

\begin{abstract}
   We prove the uniqueness of several excited states to the ODE $\ddot y(t) + \frac{2}{t} \dot y(t) + f(y(t)) = 0$, $y(0) = b$, and $\dot y(0) = 0$ for the model nonlinearity $f(y) = y^3 - y$. The $n$-th excited state is a solution with exactly $n$ zeros and which tends to $0$ as $t \to \infty$. These represent all smooth radial nonzero solutions to the PDE $\Delta u + f(u)= 0$ in $H^1$. We interpret the ODE as a damped oscillator governed by a double-well potential, and the result is proved via rigorous numerical analysis of the energy and variation of the solutions. More specifically, the problem of uniqueness can be formulated entirely in terms of inequalities on the solutions and their variation, and these inequalities can be verified numerically. 
\end{abstract}

\maketitle

\tableofcontents

\section{Introduction}

Consider the ODE
\begin{align}
    \ddot{y}(t) + \frac{2}{t}\dot{y}(t) + f(y(t)) = 0,\label{ODE:eq}\\
    y(0) = b,\ \dot{y}(0) = 0 \label{ODE:init_val}.
\end{align}
In this paper, we will only consider the model case $f(y) = y^3 - y$, but it will be convenient to use the more general notation for the nonlinearity. Smooth solutions to this ODE exist for all $t\ge0$ and any $b\in\R$, and they are unique. We denote them by $y_b$, or simply $y$. The singular coefficient at $t=0$ can be dealt with by a power series ansatz, or by Picard iteration. 
Solutions to this ODE correspond to radial smooth solutions of the PDE 
\EQ{\label{eq:PDE}
\Delta u + f(u) = 0
}
 in three dimensions, under the identification $t=r$, the radial variable.  Dynamics of~\eqref{ODE:eq} resembles particle motion in a double well as in Figure \ref{fig:double_well}, with time varying friction. 
\begin{figure}[ht]
\includegraphics[width=0.5\textwidth]{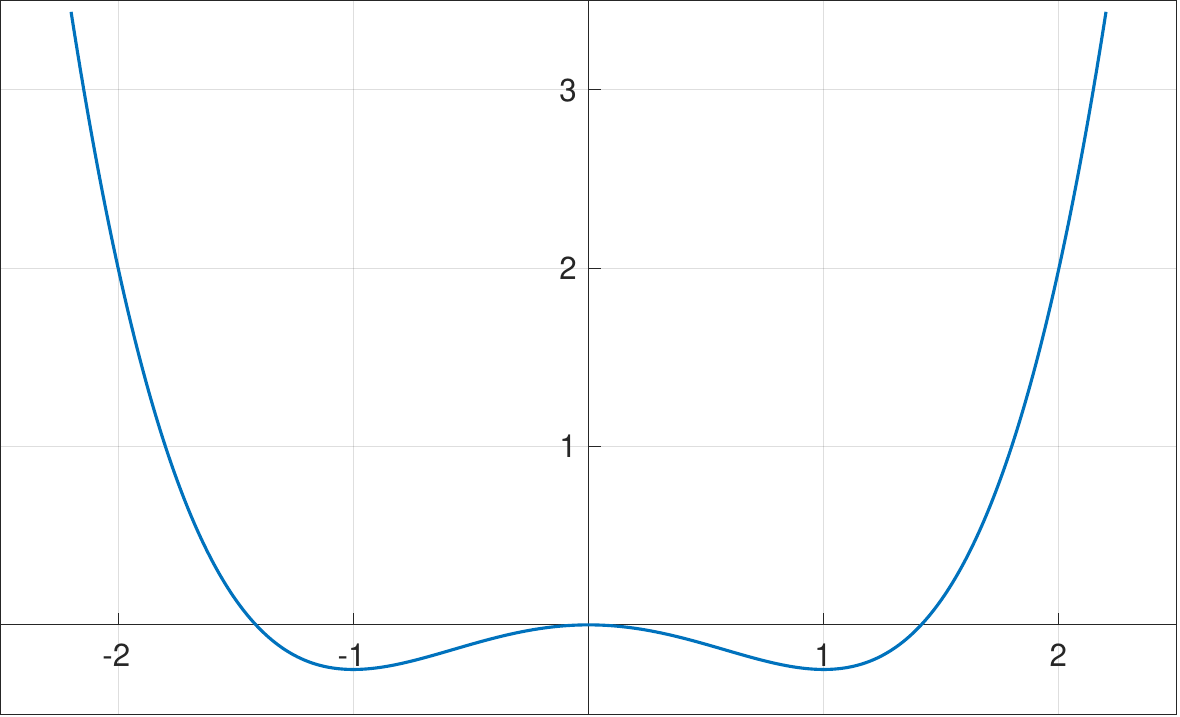}
\caption{Potential function $V(y) = \frac{1}{4}y^4 - \frac{1}{2}y^2$}
\label{fig:double_well}
\end{figure}
The qualitative behavior exhibits a trichotomy: we either have 
 \begin{itemize}
     \item $(y_b(t),\dot y_b(t)) \to (1,0)$
     \item $(y_b(t),\dot y_b(t)) \to (-1,0)$
     \item $(y_b(t),\dot y_b(t)) \to (0,0)$,
 \end{itemize}
see Section~\ref{sec:dyn} below. A \textit{bound state} is a nonzero solution with $y(t) \to 0$. Only these solutions give rise to nontrivial solutions $u\in H^1(\R^3)$ of the elliptic PDE~\eqref{eq:PDE}. A \textit{ground state} is a positive bound state, and an {\it $n^{th}$ excited state} is a bound state with precisely $n$ zero crossings. The $0^{th}$ bound state is the ground state.
Existence and uniqueness of these bound states have been investigated since the 1960s. In 1967, using a variational characterization, Ryder~\cite[Theorem II]{Ryder} showed the existence of both ground and excited states with any finite number of zero crossings. In 1972, Coffman~\cite[Section 6]{Coff} related Ryder's {\em characteristic values} to degree theory in infinite dimensions and Lyusternik-Schnirelman techniques.  Most importantly, \cite{Coff} also established uniqueness  of the ground state for the cubic case. For more general nonlinearities, ground state uniqueness was then shown by  Peletier, Serrin~\cite{PS1, PS2}, McLeod, Serrin~\cite{McLSerr}, Zhang~\cite{Z}, Kwong~\cite{K}, and finally in greatest generality by McLeod in 1992~\cite{M}. Clemons, Jones~\cite{CJ93} gave a different proof of McLeod's theorem based on the Emden-Folwer transformation and unstable manifold theory. In 1983, Berestycki and Lions~\cite{BerL1, BerL2} solved the existence problem of radial bound states for~\eqref{eq:PDE} for all $H^1$ subcritical nonlinearities $f(u)$ in all dimensions, see also the earlier work by Strauss~\cite{Strauss77}.  

However, uniqueness of excited states in the radial class, i.e., for the ODE~\eqref{ODE:eq},  remained open for most nonlinearities. In fact, in their 2012 text, Hastings, McLeod~\cite[Chapter 19]{HMcL} list this problem as one of three major open problems in nonlinear ODEs. We note that there has been some uniqueness results for specific nonlinearities; in 2005, Troy \cite{Troy} proved the uniqueness of the first excited state for a piecewise linear nonlinearity by analyzing the explicit solutions, and in 2009, Cort\'azar, Garc\'ia-Huidobro, and Yarur \cite{CGY} proved uniqueness of the first excited state with restrictions on $y f'(y)/f(y)$. However, neither cover the cubic nonlinearity. In this paper, we provide a rigorous computer-assisted proof of the uniqueness of the first excited state for the cubic nonlinearity. The proof technique combines analytical dynamics with the rigorous ODE solver \VNODELP{}, see Section~\ref{sec:vnode} and \cite{vnode}. The latter works with interval arithmetic and therefore does not compute precise solutions (which is impossible), but rather intervals  containing the solution at any given time. These inclusions accommodate all  errors incurred through floating point arithmetic, and are therefore themselves free of errors.  

\begin{theorem}\label{thm:first_excited_state_unique}
The first twenty excited states of ODE (\ref{ODE:eq}-\ref{ODE:init_val}) are unique for $f(y)=-y+y^3$.
\end{theorem}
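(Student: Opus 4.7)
The plan is to combine classical phase-plane and energy analysis of \eqref{ODE:eq} with rigorous interval integration via \VNODELP{}. By the symmetry $y \mapsto -y$ of the cubic nonlinearity it suffices to treat $b > 0$. Starting from the trichotomy (Section~\ref{sec:dyn}) that $y_b \to +1$, $y_b \to -1$, or $y_b \to 0$, continuous dependence in $b$ shows that the first two alternatives are open sets, so bound states form the closed boundary between them. Labelling solutions by their zero-crossing count, each $n$-th excited state initial value $b_n$ must lie in a finite interval $I_n \subset (\sqrt{2},\infty)$, where the lower bound comes from the energy $E(t) = \tfrac12 \dot y^2 + V(y)$ with $V(y) = \tfrac14 y^4 - \tfrac12 y^2$ and its monotonicity $\dot E = -(2/t) \dot y^2 \le 0$.

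My first task would be to localise each $I_n$, for $n = 0, 1, \ldots, 19$, to a narrow bracket $[b_n^-, b_n^+]$. Using \VNODELP{} I would propagate rigorous enclosures of $(y,\dot y)$ from initial data running over successively shorter bracketing intervals, shrinking by bisection until certifying, for every $b \in [b_n^-, b_n^+]$, that $y_b$ has exactly $n$ sign changes on a chosen finite window and that $(y(T_n), \dot y(T_n))$ lies in a prescribed small neighbourhood of $(0,0)$ at a well-chosen time $T_n$.

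Uniqueness within $[b_n^-, b_n^+]$ is the variational heart of the proof. Writing $v_b(t) = \partial_b y_b(t)$, the variation solves $\ddot v + (2/t) \dot v + f'(y_b)\,v = 0$ with $v(0) = 1$, $\dot v(0) = 0$, and can be integrated simultaneously with $y_b$. I would formulate ``two distinct $n$-th excited states in $[b_n^-, b_n^+]$'' as the vanishing at $t = T_n$ of a specific scalar combination of $(y,\dot y,v,\dot v)$ — a Wronskian-type quantity measuring transversality of the one-parameter family $\{y_b\}$ to the codimension-one stable set of $(0,0)$. Verifying a strict sign on this quantity uniformly over $[b_n^-, b_n^+]$, together with the bracket from the previous step, forces $b_n$ to be unique in $I_n$.

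The main obstacle is the endgame. \VNODELP{} produces enclosures only at finite time, while ``bound state'' is a statement about $t = \infty$. Linearising \eqref{ODE:eq} around $y = 0$ yields $\ddot u + (2/t) \dot u - u = 0$, whose fundamental solutions behave like $e^{\pm t}/t$, so the stable manifold is codimension one in $(y,\dot y)$-space at any late time. I would reduce $y_b \to 0$ to the analytic condition that $(y(T_n), \dot y(T_n))$ lies in a sufficiently narrow cone around the stable direction, and rephrase the uniqueness inequality on $v_b$ as transversality against that cone. Making the numerical tolerances, the cone width, and the variational sign all mutually consistent — tight enough for uniqueness but loose enough for \VNODELP{} to close its enclosures over the whole bracket $[b_n^-, b_n^+]$ — will be the technical bottleneck and the factor that determines how many excited states can be handled in practice.
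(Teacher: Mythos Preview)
Your overall architecture---energy monotonicity, trichotomy, rigorous interval integration, and the variational equation $v_b = \partial_b y_b$---matches the paper's, but your endgame and your handling of the unbounded range differ in ways worth flagging.

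For the endgame you propose to linearise around $y=0$, identify the one-dimensional stable direction of $\ddot u + (2/t)\dot u - u = 0$, and then certify that the family $b\mapsto (y_b(T_n),\dot y_b(T_n))$ crosses the stable cone transversally. This is conceptually sound but harder than what the paper actually does. The paper never invokes a stable manifold. Instead it checks, at a single finite time $T$, the sign conditions $\delta_b(T)<0$, $\dot\delta_b(T)<0$ (or the opposite, depending on parity) uniformly over the bracket, and then uses two elementary lemmas: a comparison argument (Lemma~\ref{lem:above_does_cross}) showing that any $b>b^*$ in the bracket must produce an extra zero crossing after $T$, and an energy-dissipation argument (Lemma~\ref{lem:cross_over_get_trapped}) showing that any solution crossing zero with $E(T)$ small enough is trapped in the next well. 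So ``at most one $n$-th bound state and no higher ones in the bracket'' follows from four scalar sign checks plus a bound on $E(T)$, with no cone-width tuning. Your transversality formulation would work, but you would be reinventing these two lemmas in a less explicit form, and the ``mutually consistent tolerances'' you worry about simply do not arise in the paper's version.

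There is also a genuine gap: you assert that each $I_n$ is a bounded interval but give no mechanism for the upper cutoff. Bisection and \VNODELP{} alone cannot cover $b\in(B,\infty)$ for any finite $B$. The paper handles this by the rescaling $w(s) = b^{-1} y_b(s/b)$, which turns \eqref{ODE:eq} into $\ddot w + (2/s)\dot w + w^3 - b^{-2} w = 0$ with fixed initial data $w(0)=1$, $\dot w(0)=0$ (Lemma~\ref{lem:ode at infty}). For $b$ large this is a perturbation of the defocusing equation $\ddot w + (2/s)\dot w + w^3 = 0$, and one verifies with \VNODELP{} over the compact parameter range $\beta = 1/b \in (0,\beta_0]$ that $w$ has at least $N+1$ sign changes, hence so does $y_b$. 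Without this rescaling step (or an equivalent large-$b$ asymptotic argument) your proof cannot close.
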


The method is robust, and extends to both more general nonlinearities, as well as other dimensions. But we will leave the verification of this claim for another paper. The code involved in the proof is publicly available, see the GitHub repository \url{https://github.com/alexander-cohen/NLKG-Uniqueness-Prover}. The readers can verify uniqueness of higher excited states beyond the $20^{th}$ using the arguments of this paper. Computation time is the main obstacle to going further than the twentieth one, to which the authors chose to limit themselves. See Figure~\ref{fig:lim_pos_graph} for a graph of the limiting position of $y_b(T)$ as a function of $b$, up to the twentieth excited state. The rigorous numerical work done in this paper proves that this graph holds. 

The uniqueness property of the ground state soliton is of fundamental importance to the classification of its long-term evolution under the nonlinear cubic Schr\"odinger or Klein-Gordon flows. See for example~\cite{CazNLS} and~\cite{NakS}. The uniqueness property of the excited states should therefore also be seen as a bridge to dynamical results. As a first step, one needs to determine the spectrum of the linearized operator 
\[H = -\Delta +1 -3\phi^2 \]
in the radial subspace of $L^2(\R^3)$. Here $\phi$ is any radial bound state solution of the PDE~\eqref{eq:PDE}. If $\phi$ is the ground state, then it is known that the spectrum over the radial functions contains a unique negative eigenvalue, and no other discrete spectrum up to and including $0$ energy (nonradially, due to the translation symmetry, $0$ is an eigenvalue of multiplicity~$3$); see~\cite{NakS}. A particularly delicate question pertains to the shape of the spectrum in the interval $(0,1]$, including the threshold $1$ of the (absolutely) continuous spectrum. This was settled in~\cite{CHS} for the ground state of the cubic power in three dimensions. It turns out that $(0,1]$ is a spectral gap,  including  the threshold, which is not a resonance. Due to the absence of an explicit expression of the ground state soliton, the method of~\cite{CHS} depended on an approximation of this special solution. Note that without uniqueness such an approximation has no meaning. The authors intend to investigate the spectral problem of excited states in another publication using the methods of~\cite{CDSS}, which essentially require the uniqueness property of the special solution~$\phi$ (in the case of~\cite{CDSS} $\phi$ is the so-called Skyrmion).    
\begin{figure}
    \centering
    \includegraphics[width=0.8\textwidth]{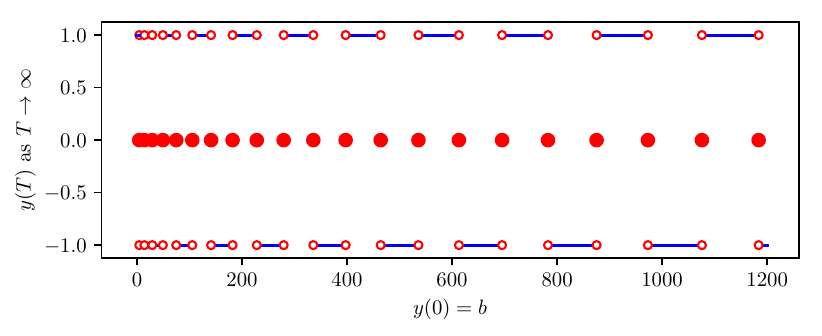}
    \caption{Limiting position $y_b(T)$ as $T\to \infty$, plotted as a function of the initial condition $b$ up to $b = 1200$. The solid red dots represent bound states, and this graph holds due to Theorem~\ref{thm:first_excited_state_unique} and the rigorous numerical work done in this paper.}
    \label{fig:lim_pos_graph}
\end{figure}

\section{Overview of approach}
\subsection{Toy example: finding zeros of a function}
Suppose we wish to find the number of zeros of the function $f$ as shown in the figure. Numerical computations make it clear that $f$ has exactly 3 zeros -- how can we use a computer to prove this rigorously?  

A first approach might be to find the approximate location of those zeros with reasonably  high precision, using floating point arithmetic. Say they lie at approximately $y_1, y_2, y_3$. Then we can use interval arithmetic to show rigorously that $f$ is bounded away from zero everywhere but the three small intervals $(y_1 - 1/100, y_1 + 1/100)$, $(y_2 - 1/100, y_2 + 1/100)$, $(y_3 - 1/100, y_3+1/100)$. Then, by interval arithmetic combined with the intermediate value theorem, $f$ has at least one zero in each of these intervals. 
Finally, to show that each of them contains \textit{at most} one zero, we can apply the mean value theorem. If we prove rigorously, using interval arithmetic, that $f'$ is bounded away from zero in those intervals, then uniqueness follows. 
\begin{figure}[ht]
\includegraphics[width=0.5\textwidth]{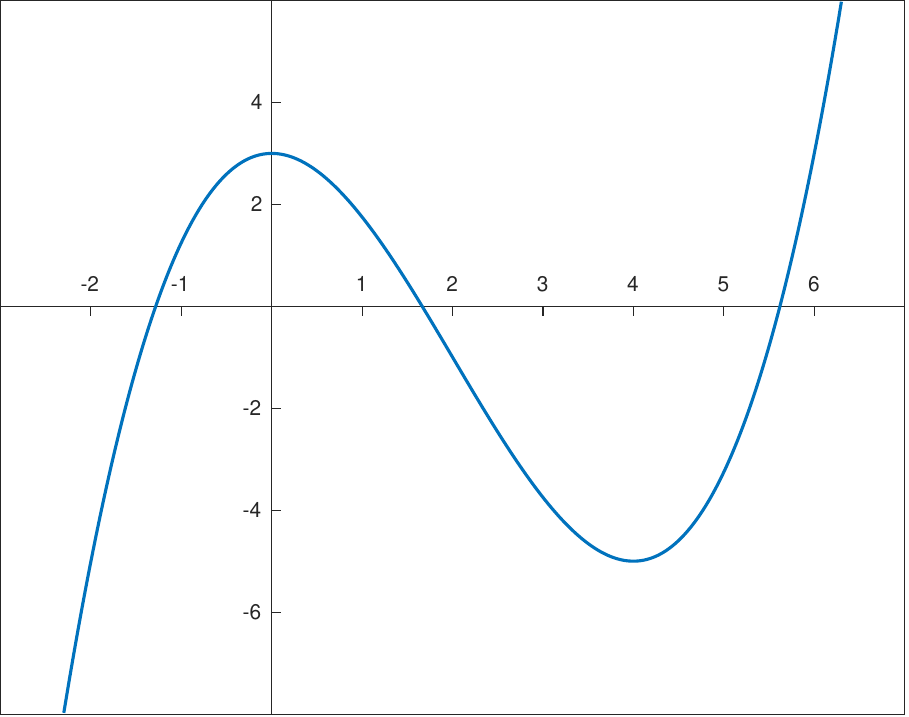}
\end{figure}
Notice that if $f$ has infinitely many zeros with a limit point, then $f'$ must be zero at that limit point. As expected, this method would break down in such a scenario. 

\subsection{Finding and isolating excited states}\label{sec:over}
We apply a similar idea  to the ODE (\ref{ODE:eq}). We now outline the approach by means of the ground state. Suppose we find numerically that the unique ground state should be at height $b_0 \approx 4.3373$. Using a rigorous ODE solver, we can prove that for all $b \in (1, b_0 - 0.001)$, $y_b(t) > 0$ up to some time $T$, and $E(T) < 0$. This will imply by analytical arguments, see the next section, that $y_b(t) \to 1$ and $y_b(t)$ is positive, so it is not a ground state. Similarly, we can show that for $b \in (b_0 + 0.001, 50)$, the solution passes over $y = 0$, and thus is not a ground state. It follows from a connectedness argument that there is some ground state in the interval $(b_0 - 0.001, b_0 + 0.001)$. To prove that there is \textit{exactly} one ground state in that interval, we find some large time $T$ such that $\delta_b(T), \dot{\delta}_b(T) < 0$ for all $b \in (b_0 - 0.001, b_0 + 0.001)$, where $\delta_b=\partial_b y_b$. This means that if $b_0^*$  is the actual ground state (rather than an approximation), for any $b > b_0^*$ in our interval, $y_b(T) < y_{b_0^*}(T)$ and $\dot{y}_b(T) < \dot{y}_{b_0^*}(T)$ by the mean value theorem. One can then prove assuming this condition that $y_b(T)$ crosses over zero and lands in the second well, see Lemma~\ref{lem:cross_over_get_trapped}. This will show that there is at most one ground state in the interval $(b_0 - 0.001, b_0 + 0.001)$. All that remains is showing that there is no ground state in the range $(50, \infty)$. To this end, we rescale the ODE (\ref{ODE:eq}) so that it takes the form $\ddot{w} + \frac{2}{t}\dot{w} + w^3 - b^{-2} w = 0$, $w(0)=1$, $\dot{w}(0)=0$. Again using \VNODELP{} we then show that the solution of this equation exhibits more than any given number of zeros provided $b$ is sufficiently large. This then implies the same for~$y_b$.

The same approach works just as well for excited states as it does for ground states.

\subsection{Approximating solutions via interval arithmetic}\label{sec:vnode}
We now outline our computational approach. Our main tool is the \VNODELP{} package for rigorous ODE solving. The supporting website\footnote{In the online version of this paper, click on \href{http://www.cas.mcmaster.ca/~nedialk/vnodelp/}{\VNODELP{}}} is at~\cite{mcmast1}, and the documentation\footnote{In the online version click on the hyperlink~\href{http://www.cas.mcmaster.ca/~nedialk/vnodelp/doc/vnode.pdf}{\it documentation}
	} is available at~\cite{mcmast2}. 

\VNODELP{} uses exact interval arithmetic, a toolset which allows for rigorous numerical computations. Rather than computing with floating point numbers as usual, interval arithmetic treats all values as \textit{intervals} of real numbers, of the form $\mb{a} = [a_1, a_2]$ where $a_1, a_2$ are machine representable floating point numbers. All mathematical operations are rounded properly so that any input within the original interval ends up within the output interval. The \VNODELP{} package combines interval arithmetic with ODE solving: given an initial value problem $\dot {y} = f(y,t)$ with initial values in an interval $\mb b$, a starting time interval $\mb t_1$, and an ending time interval $\mb t_2$, the package outputs an interval $\mb y$ such that for any $b \in \mb b$, $t_1 \in \mb t_1$, $t_2 \in \mb t_2$, $y_{b,t_1}(t_2) \in \mb y$. 

A difficulty in applying \VNODELP{} to our problem is that ODE (\ref{ODE:eq}) is singular at $t = 0$. To deal with this, we approximate $y_b(t)$ near $t = 0$ by  Picard iteration. We explicitly bound the error terms in this approximation so that we can rigorously obtain an interval containing $y_b(t_0), \dot{y}_b(t_0)$ for $t_0$ small. Then \VNODELP{} can be applied to this desingularized initial value problem, and all in all we we will have rigorous bounds on our solutions and quantities defined in terms of the solutions. 

Section~\ref{sec:numerics} explains in detail 
 how we use this software, and provides links to websites containing the code and all supporting data needed in the proof of our theorem. This will hopefully allow the reader to implement the methods of this paper in other related settings. 

\section{Analytical description of the damped oscillator dynamics}
\label{sec:dyn}

\subsection{Basic properties of the ODE}
It is an elementary property that smooth solutions of~\eqref{ODE:eq}, \eqref{ODE:init_val} exist for all times $t\ge0$; in fact, we will reestablish this fact below in passing. Taking it for granted, we note that the energy $$E(t) := \frac{1}{2} \dot{y}^2(t) + V(y(t))=\frac{1}{2} \dot{y}^2(t) + \frac14 y(t)^4 - \frac12 y(t)^2 $$
satisfies 
\[
\dot{E}(t) = -\frac{2}{t}\dot{y}^2(t)
\]
and thus $E(t)\le E(0)=V(b)$ for all times. In fact, $E(t)$ is strictly decreasing unless it is a constant and that can only happen for the unique stationary solutions $(y,\dot{y})$ equal $(0,0)$ or $(\pm1,0)$. In particular, if $V(b)\le 0$, then $E(t)<0$
for all $t>0$ unless $y(t)=0$ is a constant.  We will see below that this implies that $(y(t),\dot{y}(t))\to (1,0)$ as $t\to\infty$ (recall that we are assuming $b>0$). In other words, $(y,\dot{y})(t)$ approaches the minimum of the  potential well on the right of Figure~\ref{fig:double_well} and so $\inf_{t>0} y_b(t)>0$. The range of $b$ here is $0<b\le \sqrt{2}$. 

On the other hand, if $b>\sqrt{2}$, then $V(y(t))\le E(t)<E(0)=V(b)$ for all $t>0$ whence $$y(t)^2(y(t)^2-2)\le b^2(b^2-2)$$ and thus $|y(t)|\le b$ for all $t\ge0$. We will assume from now on that $b>\sqrt{2}$. 
Rewriting the initial value problem (\ref{ODE:eq}), \eqref{ODE:init_val} in the form
\begin{equation*}
    \frac{d}{dt}(t^2 \dot{y}(t)) + t^2f(y(t)) = 0,
\end{equation*}
where $f(y)=y^3-y$ throughout,  
we arrive at the integral equations
\EQ{\label{eq:ODE sys}
    y(t) &= b + \int_0^t \dot y(s)\, ds, \\
    \dot y(t) &= -\int_0^t \frac{s^2}{t^2} f(y(s))\, ds = \int_0^t \frac{s^2}{t^2} y(s)(1-y(s)^2)\, ds
}
For short times, we obtain a unique solution by the contraction mapping principle which is smooth near $t=0$.  
Picard iteration gives better constants, which is important for starting VNODE at some positive time. We shall determine the quantitative bounds in Section~\ref{sec:Picard}. 
But first we recall the equation of variation of (\ref{ODE:eq}) relative to the initial height~$b$. 

\subsection{The equation of variation}
 We let $\delta_b(t) := \frac{\partial}{\partial b} y_b(t)$. Then differentiating (\ref{ODE:eq}), $\delta_b(t)$ satisfies the ODE
\begin{equation*}
    \ddot{\delta} + \frac{2}{t}\dot{\delta} + f'(y)\delta = 0.
\end{equation*}
with initial conditions $\delta(0)=1$
 and $\dot{\delta}(0)=0$. Notice that the ODE for $\delta$ depends on the solution $y_b(t)$. Altogether, we can make one ODE in four variables that includes $y$ and $\delta$:
\begin{equation*}
    \frac{d}{dt} \begin{pmatrix}
         y \\ v_y \\ \delta \\ v_{\delta}
    \end{pmatrix} = \begin{pmatrix}
        v_y \\ -\frac{2}{t}v_y - f(y) \\ v_{\delta} \\ -\frac{2}{t}v_{\delta} - f'(y)\delta
    \end{pmatrix}.
\end{equation*}
with initial vector
\[
\begin{pmatrix}
        y \\ v_y \\ \delta \\ v_{\delta}
    \end{pmatrix}(0)=
    \begin{pmatrix}
        b \\ 0 \\ 1 \\ 0
    \end{pmatrix}
\]
Switching again to the variables $t^2 y(t)$, respectively, $t^2\delta(t)$,  and writing the resulting ODE  in integral form, this is equivalent to 
\EQ{
\label{eq:Pic4sys}
Z(t):= \begin{pmatrix}
        y \\ v_y \\ \delta \\ v_{\delta}
    \end{pmatrix}(t) &= \begin{pmatrix}
        b \\ 0 \\ 1 \\ 0
    \end{pmatrix} + \int_0^t \begin{pmatrix}
        v_y(s) \\  - t^{-2} s^2f(y(s)) \\ v_{\delta}(s) \\  - t^{-2}s^2 f'(y(s))\delta(s)
    \end{pmatrix}\, ds \\
    & = \begin{pmatrix}
        b \\ 0 \\ 1 \\ 0
    \end{pmatrix} + \int_0^t \begin{pmatrix}
        v_y(s) \\  t^{-2} s^2 y(s)(1-y(s)^2)  \\ v_{\delta}(s) \\   t^{-2}s^2 (1-3y(s)^2)\delta(s)
    \end{pmatrix}\, ds 
}
The first three Picard iterates of this system are 
\EQ{
\label{eq:Pic It}
Z_0(t) &= \begin{pmatrix}
        b \\ 0 \\ 1 \\ 0
    \end{pmatrix}, \quad  Z_1(t) = \begin{pmatrix}
        b \\ -\frac{t}{3} f(b) \\ 1 \\ -\frac{t}{3} f'(b)
        \end{pmatrix},\quad 
        Z_2(t) = \begin{pmatrix}
        b -\frac{t^2}{6} f(b) \\ -\frac{t}{3} f(b) \\ 1 -\frac{t^2}{6} f'(b)\\ -\frac{t}{3} f'(b)
    \end{pmatrix}
}

\subsection{Picard approximation}
\label{sec:Picard}

The purpose of this section is to compare the actual solution $Z(t)$ in~\eqref{eq:Pic4sys} to the second Picard iterate $Z_2(t)$ in~\eqref{eq:Pic It} which
we denote in the form 
\EQ{\label{eq:Z2 redef}
Z_2(t) =: \begin{pmatrix} 
       \tilde y(t)   \\  \dot {\tilde y}(t)  \\ \tilde \delta(t) \\  \dot {\tilde \delta}(t) 
        \end{pmatrix} = \begin{pmatrix} 
       \tilde y(t)   \\  \tilde v_y (t)  \\ \tilde \delta(t) \\ \tilde v_\delta  (t) 
        \end{pmatrix} 
}
In fact, we will prove the following inequalities on each of the four entries of this vector. 

\begin{lemma}
\label{lem:Z Z2}
Suppose $b\ge\sqrt{2}$. Then for all times $t\ge0$, 
\EQ{\label{eq:Z Z2}
   \tilde y(t) &\leq y(t) \leq  \tilde y(t) + \frac{f(b)f'(b)}{120}t^4\leq  \tilde y(t) + \frac{b^5}{40}t^4, \\
   \dot {\tilde y}(t) & \leq \dot y(t) \leq  \tilde y(t) + \frac{f(b)f'(b)}{30}t^3 \leq \dot {\tilde y}(t) + \frac{b^5}{10}t^3
}
For all $0\le t\le t_*$ with 
\EQ{\label{eq:t*def}
t_* := \min\Big(\sqrt{\frac{6(\sqrt{3}\,b -1)}{\sqrt{3}\, b(b^2-1)}}, \frac{\log 4}{\sqrt{3}\,b} \Big)
}
one has 
\EQ{\label{eq:Z Z2 2}
   \tilde \delta(t) &\leq \delta(t) \leq    \tilde \delta(t) + \frac{b^4}{8}t^4, \\
   \dot {\tilde \delta}(t) & \leq \dot \delta(t) \leq \dot {\tilde \delta}(t) + \frac{b^4}{2}t^3
}
\end{lemma}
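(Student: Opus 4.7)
My plan is to prove the inequalities in two stages: first the bounds on $y$ and $\dot y$ via direct sign analysis of the integral equation, and then the bounds on $\delta$ and $\dot\delta$ via a bootstrap argument that uses the definition of $t_*$ to confine $\delta$ to the interval $(0,1]$.

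\emph{Stage 1.} Starting from
\[
\dot y(t) - \dot{\tilde y}(t) = -\int_0^t \tfrac{s^2}{t^2}\bigl[f(y(s))-f(b)\bigr]\,ds,
\]
I would factor $f(y)-f(b) = (y-b)(y^2+by+b^2-1)$. The quadratic has discriminant $4-3b^2 \leq -2$ for $b\geq\sqrt{2}$, hence is strictly positive on $\mathbb{R}$; combined with the a priori bound $|y(s)|\leq b$ already established in the text, this gives $f(y)\leq f(b)$, so $\dot y\geq\dot{\tilde y}$ and therefore $y\geq\tilde y$. For the upper bounds I would use $|f(y)-f(b)|\leq f'(b)(b-y)$ (since $y^2+by+b^2-1\leq f'(b)$ when $|y|\leq b$) together with the estimate $b-y(s)\leq \tfrac{s^2}{6}f(b)$ supplied by the just-proved lower bound. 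Two integrations and the crude estimates $f(b)\leq b^3$, $f'(b)\leq 3b^2$ give the stated constants $\tfrac{b^5}{40}t^4$ and $\tfrac{b^5}{10}t^3$.

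\emph{Stage 2.} For the $\delta$ estimates I would run a bootstrap on
\[
S := \bigl\{\,T\in[0,t_*] : 0\leq D_\delta(s)\leq \tfrac{b^4}{8}s^4 \text{ for all } s\in[0,T]\,\bigr\},\qquad D_\delta := \delta-\tilde\delta,
\]
which contains a right neighborhood of $0$ by a Taylor expansion showing $D_\delta(s) \sim c\, s^4$ with $c=\tfrac{bf(b)+f'(b)^2/6}{20}<\tfrac{b^4}{8}$. On $[0,T]$ with $T\in S$, the hypothesis together with the definition of $t_*$ confines $\delta(s)\in(0,1]$: the upper bound $\tilde\delta(s)+\tfrac{b^4}{8}s^4\leq 1$ is a direct computation from $t_*^2\leq(\log 4)^2/(3b^2)$, and $\tilde\delta>0$ on $[0,t_*]$ by the same estimate. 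Using $|y|\leq b$ so $f'(y)\leq f'(b)$, the decomposition
\[
f'(y)\delta - f'(b) = (f'(y)-f'(b))\delta + f'(b)(\delta-1)
\]
has both summands $\leq 0$, giving $D_{\dot\delta}\geq 0$ and hence $D_\delta\geq 0$ on $[0,T]$. For the matching upper bound I would estimate
\[
(f'(b)-f'(y))\delta \leq 3(b^2-y^2) \leq 6b(b-y) \leq b s^2 f(b) \leq b^4 s^2
\]
(using $\delta\leq 1$ and the Stage 1 bound on $b-y$) and $f'(b)(1-\delta)\leq f'(b)(1-\tilde\delta) = \tfrac{s^2}{6}f'(b)^2 \leq \tfrac{3}{2}b^4 s^2$, summing to $\tfrac{5}{2}b^4 s^2$. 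Integrating against the kernel $u^2/s^2$ then twice in $s$ gives exactly $D_{\dot\delta}\leq \tfrac{b^4}{2}s^3$ and $D_\delta\leq \tfrac{b^4}{8}s^4$. Since $f(b)<b^3$ and $f'(b)<3b^2$ are strict for finite $b$, these derived bounds are strict for $s>0$, so $S$ is simultaneously closed and open in $[0,t_*]$ and therefore equals $[0,t_*]$.

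\emph{Expected main obstacle.} The $\delta$ bootstrap is the technical heart. The constants in $t_*$ are tuned precisely to keep $\delta\in(0,1]$: the $\log 4$ component controls $\tilde\delta+\tfrac{b^4}{8}s^4\leq 1$, while the first component ensures $\tilde y\geq 1/\sqrt{3}$ and hence $f'(y)\geq 0$, providing a cleaner sign baseline for the integrand. Outside this regime the mixed-sign pieces of $f'(y)\delta-f'(b)$ would no longer cooperate and the argument would break. The coefficients $\tfrac{b^4}{8}$, $\tfrac{b^4}{2}$ in the conclusion emerge exactly from the arithmetic $\tfrac{5}{2}b^4$ via the two integrations, so the bootstrap just barely closes, and one must carefully verify that the Taylor leading coefficient $c$ satisfies the strict inequality $c<\tfrac{b^4}{8}$ in order to start the bootstrap from $t=0$.
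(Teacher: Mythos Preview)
Your Stage~1 is essentially the paper's argument: you factor $f(y)-f(b)=(y-b)(y^2+by+b^2-1)$ where the paper instead observes $|f(y)|\leq f(b)$ for $|y|\leq b$, but the resulting a~priori bounds and the two integrations are identical.

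Your Stage~2 is correct but takes a genuinely different route. The paper does \emph{not} bootstrap. It first runs a Gronwall argument on $h(t)=|\dot\delta(t)|+\sqrt{3}\,b\,|\delta(t)-1|$ to obtain $|\delta-1|\leq 1$ (hence $\delta\geq 0$) on $\bigl[0,\tfrac{\log 4}{\sqrt{3}\,b}\bigr]$, and separately uses the first branch of $t_*$ to force $y\geq\tilde y\geq 1/\sqrt{3}$, hence $f'(y)\geq 0$. With both $f'(y)\geq 0$ and $\delta\geq 0$ in hand, the integral formula $-v_\delta=t^{-2}\int_0^t s^2 f'(y)\,\delta\,ds\geq 0$ yields $\delta\leq 1$ directly, and the remaining estimates follow. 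The key structural difference is the decomposition used for the lower bound: the paper writes
\[
f'(b)-f'(y)\delta=(f'(b)-f'(y))+f'(y)(1-\delta),
\]
which needs $f'(y)\geq 0$ for the sign of the second summand, whereas your decomposition $(f'(y)-f'(b))\delta+f'(b)(\delta-1)$ needs only $\delta\geq 0$. As a consequence your argument actually uses only the $\tfrac{\log 4}{\sqrt{3}\,b}$ branch of $t_*$ --- despite your remark that the first branch ``provides a cleaner sign baseline,'' you never invoke it --- which is a mild strengthening of the stated conclusion. The price you pay is the extra care in launching and closing the continuity argument (the Taylor check $c<\tfrac{b^4}{8}$ and the openness step), whereas the paper's Gronwall estimate supplies the required sign information a~priori with no induction.
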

\begin{proof}
Since energy is decreasing and $b \ge \sqrt{2}$, $|y(t)|\le b$ for all $t\ge0$. Note that 
\[f(b) \ge f(\sqrt{2}) > \frac{2}{3\sqrt{3}},\]
which is the absolute value of the local minima and maxima, so $|f(y)| \leq f(b)$ for all $|y| \leq b$. Therefore $|f(y(t))| \leq f(b)$ for all $t \geq 0$. Substituting this bound into \eqref{eq:ODE sys} yields
\begin{align}
    |\dot y(t)| &\leq \frac{t}{3} f(b), \label{eq:apriori_yderiv}\\
    0\leq b-y(t)  &\leq \frac{t^2}{6} f(b). \label{eq:apriori_y}
\end{align}
for all times $t\ge0$. 
Leveraging these bounds, we now compare the actual solution to its second Picard iterates as in~\eqref{eq:Pic It}.  In view of~\eqref{eq:Z2 redef},  $$\tilde y(t) = b - \frac{t^2}{6} f(b),\qquad \dot {\tilde y}(t) = - \frac{t}{3} f(b)$$ and we obtain via the mean value theorem that
\EQ{\nn 
   0\leq \dot y(t) - \dot {\tilde y}(t) &= \int_0^t \frac{s^2}{t^2}\big[ f(b) -f(y(s))\big]\, ds \leq \frac{f(b)f'(b)}{30}t^3, \\ 
   0\leq y(t) - \tilde y(t) &\leq \frac{f(b)f'(b)}{120}t^4,
}
where we used that $|f'(y)|\le f'(b)=3b^2-1$ for all $|y|\leq b$. 

The last two rows of \eqref{eq:Pic4sys} imply that
\EQ{\nn
|\delta(t)-1| &\leq \int_0^t |\dot{\delta}(s)|\, ds \\
|\dot{\delta}(t)| & \leq t^{-2} f'(b)\int_0^t s^2 |\delta(s)| \, ds \\ &\leq \frac{t}{3}f'(b) + t^{-2} f'(b)\int_0^t s^2 |\delta(s)-1|\, ds\\
&\leq tb^2 + 3b^2 \int_0^t |\delta(s)-1|\, ds
}
whence $h(t):=|\dot{\delta}(t)| +\mu |\delta(t)-1|$ with $\mu:=\sqrt{3}\, b$ satisfies
\EQ{\label{eq:hbd} 
 h(t) &\leq tb^2 +  \mu\int_0^t h(s) \, ds \\
 h(t) &\leq  \frac{b^2}{\mu}(e^{\mu t}-1)
}
We infer from the last two rows of \eqref{eq:Pic4sys} and~\eqref{eq:Z2 redef} that
\EQ{\label{eq:deltdel}
\delta(t)-\tilde\delta(t) & = \int_0^t (v_\delta(s) - \tilde v_\delta(s))\, ds \\
v_\delta(t) - \tilde v_\delta(t) &= t^{-2}\int_0^t s^2 (f'(b) - f'(y(s))\delta(s))\, ds \\
& = t^{-2}\int_0^t s^2 \big[f'(b) - f'(y(s)) + f'(y(s)) (1-\delta(s))\big]\, ds 
}
as well as 
\EQ{\label{eq:pos}
1-\delta(t) &= -\int_0^t v_\delta(s)\, ds \\
-v_\delta(t) & = t^{-2}\int_0^t s^2  f'(y(s))\delta(s) \, ds
}
Let $t_*>0$ be such that $f'(y(t))\ge0$ and $\delta(t)\ge0$ for all $0\le t\le t_*$. Then by~\eqref{eq:pos}, $\delta(t)\le1$ for those times and thus by~\eqref{eq:deltdel} 
\EQ{\nn 
\delta(t)-\tilde\delta(t) &\ge 0 \\
v_\delta(t) - \tilde v_\delta(t) &\ge 0 
}
for all $0\le t\le t_*$. By \eqref{eq:hbd}, we have $\delta(t)\ge0$ as long as
\[
e^{\mu t}\le 4, \quad t\le \frac{\log 4}{\sqrt{3}\,b}
\]
Moreover,  $f'(y(t))\ge0$ as long as $\sqrt{3}\, y(t)\ge 1$ which by \eqref{eq:Z Z2} holds provided  
\[
\sqrt{3}\, \tilde y(t)\ge 1,\quad t\le \sqrt{\frac{6(\sqrt{3}\,b -1)}{\sqrt{3}\, b(b^2-1)}}
\]
whence in summary gives \eqref{eq:t*def} and the lower bounds in~\eqref{eq:Z Z2 2}.  
For the upper bound, note that \eqref{eq:pos}, respectively,~\eqref{eq:Z Z2} imply that \EQ{\nn 
-v_\delta(t)&\leq t b^2, \quad 1-\delta(t) \leq \frac12 t^2 b^2 \\
b-y(t) &\leq \frac{t^2}{6} b^3 
}
Inserting these bounds into \eqref{eq:deltdel} yields by the mean value theorem
\EQ{\nn
v_\delta(t) - \tilde v_\delta(t) 
& \leq t^{-2}\int_0^t s^2 (b^4 s^2 + 3b^4 s^2/2)\, ds \\
&\leq \frac12 b^4 t^3 \\
\delta(t)-\tilde\delta(t) & = \int_0^t (v_\delta(s) - \tilde v_\delta(s))\, ds \\
&\leq \frac18 b^4 t^4
}
as claimed. 
\end{proof}

\subsection{The equation at infinity}\label{sec:eq_at_infty}

As explained in Section~\ref{sec:over}, to prove uniqueness of the first excited state  we will need to show that all $y_b$  have at least two crossings for all sufficiently large~$b$. For the second  excited state, we need to do the same with three crossings, and so on.  This will be  accomplished by means of the following lemma. 

\begin{lemma}
\label{lem:ode at infty}
Let $y(t)$ be a solution to ODE (\ref{ODE:eq}-\ref{ODE:init_val}). Let $w(s) = \frac{1}{b}y(s/b)$. Then $w$ satisfies
\begin{align}
    \ddot{w} + \frac{2}{s} \dot{w} + w^3 - \beta^2 w = 0; \label{ODEv:eq}\\
    w(0) = 1,\quad \dot{w}(0) = 0. \label{ODEv:init_val}
\end{align}
where $\beta := \frac{1}{b}$. 
\end{lemma}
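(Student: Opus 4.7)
The plan is to verify the lemma by a direct chain rule computation, since the statement is purely a rescaling identity. The substitution is $t = s/b$ so that $y(t) = b\, w(s)$. Differentiating, I get $\dot{w}(s) = b^{-2} \dot{y}(t)$ and $\ddot{w}(s) = b^{-3} \ddot{y}(t)$. I would then evaluate the original equation \eqref{ODE:eq} at $t = s/b$, which reads
\[
\ddot{y}(t) + \frac{2b}{s}\dot{y}(t) + y(t)^3 - y(t) = 0,
\]
and multiply through by $b^{-3}$. Using $b^{-3}y(t)^3 = w(s)^3$ and $b^{-3} y(t) = b^{-2} w(s) = \beta^2 w(s)$, the equation becomes
\[
\ddot{w}(s) + \frac{2}{s}\dot{w}(s) + w(s)^3 - \beta^2 w(s) = 0,
\]
which is exactly \eqref{ODEv:eq}.

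For the initial conditions, $w(0) = b^{-1}y(0) = b^{-1}\cdot b = 1$ and $\dot{w}(0) = b^{-2}\dot{y}(0) = 0$, matching \eqref{ODEv:init_val}. Since \eqref{ODE:eq} is a well-defined smooth ODE on $(0,\infty)$ with a regular singular point at $0$, and the rescaling $s \mapsto s/b$ is a diffeomorphism of $(0,\infty)$ onto itself that preserves the nature of the singularity, no issue arises at $s=0$: the smoothness of $w$ near $s = 0$ is inherited from that of $y$ near $t = 0$.

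There is really no main obstacle in this proof; it is a bookkeeping check. The only thing worth emphasizing is the way the three terms scale: the damping term $2/t$ produces a factor of $b$ under $t = s/b$ which is cancelled by the overall $b^{-3}$; the cubic nonlinearity $y^3$ scales homogeneously of degree $3$ and absorbs all the factors of $b$; and the linear term $-y$ scales only by $b$, leaving the residual coefficient $b^{-2} = \beta^2$ that gives the parameter in the rescaled equation. This explains in particular why the rescaling sends the original problem (a fixed nonlinearity, varying $b$) to a family of equations with varying linear mass $\beta^2$ at fixed initial height $1$, which is exactly the reformulation needed in Section~\ref{sec:over} for the large-$b$ analysis.
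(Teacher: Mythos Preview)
Your proof is correct and is precisely the scaling computation the paper has in mind; the paper merely writes ``Immediate by scaling'' and omits the chain-rule bookkeeping you have spelled out. There is nothing to add.
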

\begin{proof}
Immediate by scaling. 
\end{proof}

We will analyse this initial value problem with \VNODELP{}, but as before can only start at positive times rather than at $t=0$. 
The analogue of Lemma~\ref{lem:Z Z2} is the following. We only need to approximate the ODE in~\eqref{ODEv:eq}. Indeed, since the initial condition is fixed, the equation of variations does not arise. 

\begin{lemma}
\label{lem:w approx}
Suppose $0<\beta\le\frac{1}{10}$, and let $\tilde w(t) := 1 - \frac{1-\beta^2}{6} t^2$, and $\dot{\tilde w}(t) := -\frac{1-\beta^2}{3}t$. Then for all times $t\ge0$, 
\EQ{\label{eq:wtilw}
   \tilde w(t) &\leq w(t) \leq  \tilde w(t) + \frac{t^4}{40}, \\
   \dot {\tilde w}(t) & \leq \dot w(t) \leq  \dot {\tilde w}(t) + \frac{t^3}{10}
}
\end{lemma}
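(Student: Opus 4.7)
The proof will parallel the first half of Lemma~\ref{lem:Z Z2}---the comparison of $(y,\dot y)$ to $(\tilde y,\dot{\tilde y})$---with $b$ replaced by $1$ and $f$ replaced by $g(w):=w^3-\beta^2 w$. No equation of variations enters since the initial data of \eqref{ODEv:eq}--\eqref{ODEv:init_val} is fixed, so only the two components $w,\dot w$ need to be tracked. The plan is to use energy monotonicity to obtain $|w(t)|\le 1$ globally, and then feed this a priori bound into an integrated form of the ODE to compare $(w,\dot w)$ with its second Picard iterate $(\tilde w,\dot{\tilde w})$.

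For the a priori bound, the energy $E(t):=\tfrac12\dot w(t)^2+V_\beta(w(t))$ with $V_\beta(w):=\tfrac14 w^4-\tfrac{\beta^2}{2}w^2$ satisfies $\dot E=-\tfrac{2}{s}\dot w^2\le 0$ exactly as in Section~\ref{sec:dyn}, so $V_\beta(w(t))\le V_\beta(1)=\tfrac{1-2\beta^2}{4}$. Since $\beta\le 1/10<1$, $V_\beta$ is strictly monotone on each of $[1,\infty)$ and $(-\infty,-1]$, which forces $|w(t)|\le 1$ for all $t\ge 0$. On this range one checks directly that $|g(w)|\le g(1)=1-\beta^2$ and $|g'(w)|=|3w^2-\beta^2|\le 3$.

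Rewriting \eqref{ODEv:eq} as $\tfrac{d}{dt}(t^2\dot w)=-t^2 g(w)$ and integrating twice yields
\begin{equation*}
\dot w(t)=-\frac{1}{t^2}\int_0^t s^2 g(w(s))\, ds,\qquad w(t)=1+\int_0^t \dot w(s)\, ds.
\end{equation*}
The one-sided bound $g(w(s))\le g(1)=1-\beta^2$ immediately gives $\dot w(t)\ge -\tfrac{(1-\beta^2)t}{3}$ and, on integrating, $w(t)\ge 1-\tfrac{(1-\beta^2)t^2}{6}=\tilde w(t)$, which are the lower halves of \eqref{eq:wtilw} (interpreting $\dot{\tilde w}(t)$ as $\tfrac{d}{dt}\tilde w(t)=-\tfrac{(1-\beta^2)t}{3}$, since the ``$+$'' in the lemma's definition of $\dot{\tilde w}$ appears to be a sign typo). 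For the upper halves, the just-obtained lower bound also supplies the crude a priori estimate $|w(s)-1|\le\tfrac{(1-\beta^2)s^2}{6}$, and combined with the mean value theorem and $|g'|\le 3$ this yields $|g(w(s))-g(1)|\le\tfrac{(1-\beta^2)s^2}{2}$. Inserting into
\[
\dot w(t)-\dot{\tilde w}(t)=-\frac{1}{t^2}\int_0^t s^2\bigl(g(w(s))-g(1)\bigr)\, ds
\]
gives $|\dot w(t)-\dot{\tilde w}(t)|\le\tfrac{(1-\beta^2)t^3}{10}\le\tfrac{t^3}{10}$, and one more integration in $t$ produces $|w(t)-\tilde w(t)|\le t^4/40$, which is the desired upper half of \eqref{eq:wtilw}.

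There is no essential obstacle: once the energy argument pins down $|w|\le 1$, the rest is a direct specialization of the two-component Picard-comparison calculation already carried out in Lemma~\ref{lem:Z Z2}. The hypothesis $\beta\le 1/10$ is used only to absorb $1-\beta^2$ cleanly into the constants $1/10$ and $1/40$; nothing beyond $\beta<1$ is essential for the argument itself, though the small-$\beta$ regime is the only one in which the lemma will later be applied.
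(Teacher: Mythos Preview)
Your proposal is correct and follows essentially the same approach as the paper: energy monotonicity to get $|w|\le 1$, then the Picard-comparison argument of Lemma~\ref{lem:Z Z2} specialized to $f_\beta(w)=w^3-\beta^2 w$ with initial height $1$. You have in fact filled in the details the paper leaves to the reader, and your observation about the sign in the stated definition of $\dot{\tilde w}(t)$ is correct---the paper's own proof writes $\dot{\tilde w}(t)=-\tfrac{f_\beta(1)}{3}t$.
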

\begin{proof}
We write the equation~\eqref{ODEv:eq} in the form
\[
\frac{d}{dt}(t^2 \dot w(t)) = -t^2 f_\beta(w(t))
\]
with 
$$f_\beta(w) := w^3 - \beta^2 w= w(w^2-\beta^2)$$ 
Solutions are global, and the energy takes the form 
\[
E_\beta(t) = \frac12 \dot w^2(t) + V_\beta(w(t)),\quad V_\beta(w) = \frac14 w^4 - \frac12\beta^2 w^2
\]
which is nonincreasing as before. Thus, $V_\beta(w(t))\le E_\beta(t)\leq V_\beta(1)=\frac14-\frac{\beta^2}{2}$ whence $|w(t)|\le 1$ for all times. 
The integral formulation of the initial value problem for $w$ is of the form \EQ{\label{eq:ODE w sys}
    w(t) &= 1 + \int_0^t \dot w(s)\, ds, \\
    \dot w(t) &= -\int_0^t \frac{s^2}{t^2} f_\beta(w(s))\, ds = \int_0^t \frac{s^2}{t^2} w(s)(\beta^2-w(s)^2)\, ds
}
Inserting $w=1$ into the right-hand side of the second row of~\eqref{eq:ODE w sys} gives $\dot{\tilde w}(t) := -\frac{f_\beta(1)}{3}t$, and $\tilde w$
is obtained by inserting this expression into the right-hand side of the first row of~\eqref{eq:ODE w sys}. These are precisely the approximate solutions appearing in the formulation of the lemma. The stated bounds are now obtained as in Lemma~\ref{lem:Z Z2} and we leave the details to the reader. 
\end{proof}

\subsection{Limit sets and convergence theorems}
As we have already noted, 
an important quantity associated with the equation (\ref{ODE:eq}) is the energy $E(y, \dot{y}) = \frac{1}{2} \dot{y}^2 + V(y)$, where $V(y) = \int_0^y f(y)$ is the potential energy. Explicitly, $V(y) = \frac{y^4}{4} - \frac{y^2}{2}$ resembles a double well as in the first figure above. 
Were we to modify our ODE to $\ddot{y} + f(y) = 0$, then the energy would be preserved. The term $\frac{2}{t} \dot{y}$ adds a time dependent frictional force, so energy decreases monotonically:
\begin{equation*}
    \dot{E}(t) = \dot{y}\ddot{y}(t) + f(y(t))\dot{y}(t) = -\frac{2\dot{y}^2(t)}{t}.
\end{equation*}
The interpretation of the radial form of the PDE~\eqref{eq:PDE} as a damped oscillator with the role of time being played by the radial variable is of essential importance in this section. Tao~\cite{TCBMS} emphasized this already in his exposition of ground state uniqueness, but here we will rely  on this interpretation even more heavily. In particular, the proof of the long-term trichotomy given by the solution vector of the main ODE approaching one of the three critical  points of the potential follows the dynamical argument in the damped oscillator paper~\cite{CEG}. 

The following lemma determines the $\omega$-limit set of every trajectory in phase space. The lemma combined with the monotonicty of the energy will help us determine the desired long-term trichotomy.

\begin{lemma}\label{lemma:convergent_timestamps}
If $y(t)=y_b(t)$ is the global solution to the initial value problem~\eqref{ODE:eq}, \eqref{ODE:init_val}, then there exists an increasing unbounded  sequence $\{t_j\}$ such that $(y(t_j), \dot y(t_j)) \to (0, 0)$ or $(\pm 1, 0)$ as $j \to \infty$. 
\end{lemma}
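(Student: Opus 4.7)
The plan is to combine monotonicity of the energy $E(t) = \tfrac12 \dot y^2 + V(y)$ with a LaSalle-type argument on the autonomous limit equation $\ddot y + f(y) = 0$ to identify an element of the $\omega$-limit set of $(y(t),\dot y(t))$ as a critical point of $V$.

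First, since $\dot E = -\tfrac{2}{t}\dot y^2 \le 0$ and $E$ is bounded below by $\min V = -\tfrac14$, the energy decreases to a finite limit $E_\infty \ge -\tfrac14$, and
\[
\int_1^\infty \frac{2\,\dot y(s)^2}{s}\,ds \;=\; E(1) - E_\infty \;<\; \infty.
\]
An averaging argument on dyadic intervals yields $t_k \to \infty$ with $\dot y(t_k)\to 0$: writing $\epsilon(k) := \int_k^{2k} \dot y^2/s\,ds \to 0$, one has $\int_k^{2k}\dot y^2\,ds \le 2k\,\epsilon(k)$, so some $t_k\in[k,2k]$ satisfies $\dot y(t_k)^2 \le 2\epsilon(k)$. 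Since $|y|\le b$, pass to a subsequence (still called $\{t_k\}$) with $y(t_k)\to y^*$; then $E(t_k)\to V(y^*) = E_\infty$.

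To show $y^*\in\{0,\pm 1\}$, I would compare the trajectory to the autonomous flow. The shifted functions $y_k(s) := y(t_k + s)$ satisfy uniform bounds in $y_k, \dot y_k, \ddot y_k$ on any compact $s$-interval, so Arzel\`a--Ascoli gives a further subsequence converging uniformly on compacts to a bounded $y_\infty \in C^2(\R)$ with $\ddot y_\infty + f(y_\infty) = 0$, $(y_\infty(0),\dot y_\infty(0)) = (y^*,0)$, and constant energy $E_\infty$. The bounded orbits of this conservative equation at a fixed energy fall into three types: (i) stationary at a critical point of $V$, (ii) a non-trivial periodic orbit, or (iii) a separatrix at $E_\infty = 0$ whose closure contains $(0,0)$. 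In case (i), $y^*\in\{0,\pm1\}$ and $(t_k)$ is the required sequence. In case (iii), the invariance of the $\omega$-limit set under the autonomous flow puts the entire separatrix, and hence its limit point $(0,0)$, into the $\omega$-limit set, again producing the required sequence.

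The remaining case (ii) must be ruled out via the friction. Since the $\omega$-limit set is closed and invariant, it contains the full periodic orbit $\gamma$ of period $P$, so $(y(t),\dot y(t))$ enters every neighborhood of $\gamma$ at arbitrarily late times. By continuous dependence on parameters and the smallness of $2/t$ for $t$ large, each entry produces an approximate loop around $\gamma$ in time $\sim P$ dissipating energy at least $c/t$, where $c = \int_0^P \dot y_\gamma(s)^2\,ds > 0$. Chaining such loops, one accumulates dissipation $\gtrsim \frac{c}{P}\log T$ by time $T$, contradicting finiteness of $E(1)-E_\infty$. The main obstacle is formalizing this shadowing step: quantifying how closely the perturbed trajectory tracks $\gamma$ over a single period, and ensuring that successive loops can be chained without overlap so that their contributions truly accumulate. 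The smallness of the friction coefficient $2/t$ at large times makes this a delicate but routine continuity argument, after which the lemma follows.
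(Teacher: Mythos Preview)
Your outline is valid in spirit but takes a substantially harder route than the paper. After locating times with $\dot y\to 0$ (which you both do), the paper does not pass to a limit ODE or invoke any LaSalle-type invariance. Instead it squeezes out one more derivative: since $y,\dot y,\ddot y$ are bounded, differentiating the equation shows $\dddot y$ is bounded as well. Writing $I_n:=\int_{n-1}^n \dot y^2\,dt$, one has $\sum n^{-1}I_n<\infty$, so $I_{n_j}\to 0$ along a subsequence; boundedness of $\ddot y$ then forces $\dot y$ to be uniformly small on $[n_j-1,n_j]$, and boundedness of $\dddot y$ in turn forces $\ddot y(t_j)\to 0$ for $t_j$ in these intervals. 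Reading the ODE at $t_j$ gives $|f(y(t_j))| \le |\ddot y(t_j)| + \tfrac{2}{t_j}|\dot y(t_j)| \to 0$, so a further subsequence of $y(t_j)$ converges to a root of $f$. That is the entire proof.

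By extracting $\ddot y(t_j)\to 0$ in addition to $\dot y(t_j)\to 0$, the paper reads off $y^*\in\{0,\pm1\}$ directly from the equation, bypassing Arzel\`a--Ascoli, the autonomous limit, and the trichotomy of bounded orbits. Your ``main obstacle''---ruling out a periodic $\omega$-limit orbit via shadowing and accumulated dissipation---simply never arises here; that style of energy-accounting is what the paper deploys in the \emph{next} lemma to upgrade subsequential convergence to full convergence, so your proposal is effectively merging the two lemmas and front-loading the delicate part. Your route would ultimately work, but as written the periodic-orbit case is only sketched, and case~(iii) silently uses that the $\omega$-limit set of an asymptotically autonomous system is invariant under the limiting flow, which deserves a citation or a short argument.
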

\begin{proof}
From boundedness of the energy, we see that 
\[\sum_{n = 1}^\infty \frac{1}{n} I_n \le \int_0^\infty \frac{\dot y(t)^2}{t} \, dt < \infty,\quad I_n := \int_{n - 1}^n \dot y(t)^2\, dt.\]
 Therefore,  $I_{n_j} \to 0$ as $j \to \infty$ for some subsequence. We can  pick $t_j \in (n_j - 1, n_j)$ so that $\dot y(t_j) \to 0$. Since $E(t)$ and $y(t)$ are bounded,  $\ddot y$ is bounded, and differentiating~\eqref{ODE:eq}, we then see that $\dddot y$ is also bounded. Therefore $I_{n_j} \to 0$ implies that $\ddot y(t_j) \to 0$. This implies that 
\[|f(y(t_j))| \le |\ddot y(t_j)| + \frac{2}{t} |\dot y(t_j)| \to 0,\]
so there must be a subsequence of $y(t_{n_j})$ that converges either to $0$ or $1$.
\end{proof}

Next, we establish that each trajectory must converge to the point in its limit set, cf.\ the convergence theorems in~\cite{CEG}.  

\begin{lemma}\label{lemma:limit_behav}
Either $y_b(t) \to -1$, or $y_b(t) \to 0$, or $y_b(t) \to 1$ as $t\to\infty$. 
In all cases $\dot{y}(t)\to0$. 
\end{lemma}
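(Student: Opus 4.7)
The approach is to exploit that the energy $E(t) = \tfrac12 \dot y^2 + V(y)$ is monotone nonincreasing and bounded below by $\min V = -\tfrac14$, so $E(t)$ converges to some $E_\infty \in \R$. Applying Lemma~\ref{lemma:convergent_timestamps}, pick a sequence $t_j \to \infty$ along which $(y(t_j), \dot y(t_j))$ tends to one of $(0,0), (\pm 1, 0)$; passing to the limit in $E(t_j)$ forces $E_\infty$ to equal $V$ at that critical point, i.e., $E_\infty \in \{0, -\tfrac14\}$. The goal is then to upgrade this subsequential information to convergence along the full trajectory in the two possible cases.

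If $E_\infty = -\tfrac14$, the two nonnegative quantities $\tfrac12 \dot y(t)^2$ and $V(y(t)) + \tfrac14$ sum to $E(t) + \tfrac14 \to 0$, so each of them tends to zero. Thus $\dot y(t) \to 0$ and $y(t)^2 \to 1$. If $y$ did not eventually stay in one of $\{y > \tfrac12\}$ or $\{y < -\tfrac12\}$, then by continuity $y$ would vanish at arbitrarily large times, contradicting $V(y(t)) \to -\tfrac14$ since $V(0) = 0$. Hence $y(t) \to +1$ or $y(t) \to -1$, and $\dot y(t) \to 0$.

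If $E_\infty = 0$, the subsequence from Lemma~\ref{lemma:convergent_timestamps} must tend to $(0,0)$, the unique critical point at energy zero. To promote this to convergence of the whole trajectory, I would study the $\omega$-limit set $\Omega \subset \R^2$ of $(y, \dot y)$. As a nested intersection of closed connected tails of a bounded trajectory, $\Omega$ is nonempty, compact, and connected, and it is contained in the zero-energy figure eight $\{\tfrac12 v^2 + V(y) = 0\}$. A continuous-dependence argument shows that $\Omega$ is invariant under the autonomous flow $\ddot z + f(z) = 0$: if $(y_*, v_*) \in \Omega$ is realized by times $s_n \to \infty$, then the shifts $y(s_n + \cdot)$ converge locally in $C^1$ to the autonomous orbit through $(y_*, v_*)$, since the friction $2/(s_n + s)$ vanishes uniformly on compact time intervals. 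If $\Omega$ contained any point other than $(0,0)$, then by invariance the entire homoclinic orbit through it would lie in $\Omega$, so $(y, \dot y)$ would shadow this orbit at arbitrarily large times, each traversal contributing an amount of dissipation bounded below by a constant times $1/t$. Summing over infinitely many traversals contradicts the finite total dissipation $E(0) - E_\infty = \int_0^\infty 2 \dot y^2/t\, dt$, forcing $\Omega = \{(0,0)\}$ and hence $(y(t), \dot y(t)) \to (0,0)$.

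I expect the main obstacle to be this final dissipation step in the $E_\infty = 0$ case: since the homoclinic orbit passes arbitrarily close to the saddle $(0,0)$, where the autonomous dynamics is arbitrarily slow, one must carefully control both the minimum time per homoclinic loop and the rate at which the loop times may grow (so that $\sum 1/\tau_n$ indeed diverges). This is the kind of technical step that is handled by the damped-oscillator arguments of~\cite{CEG}.
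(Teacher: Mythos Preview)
Your plan follows the paper's line closely: reduce to $E_\infty\in\{-\tfrac14,0\}$ via Lemma~\ref{lemma:convergent_timestamps}, dispose of $E_\infty=-\tfrac14$ by the squeeze $\tfrac12\dot y^2+(V(y)+\tfrac14)\to 0$, and for $E_\infty=0$ argue by contradiction through infinite total dissipation. Your $\omega$-limit-set packaging of the $E_\infty=0$ case (connectedness, containment in the figure eight, invariance under the frictionless flow) is a somewhat more abstract route than the paper's, which works directly with the sequence $\tau_j$ of turning times $\dot y(\tau_j)=0$ and never names $\Omega$. Both routes land on exactly the bottleneck you flag: one must show the oscillation times grow slowly enough that $\sum 1/\tau_j=\infty$.

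Here is how the paper closes that gap, which your sketch leaves open. Since $E(t)>0$ throughout, $\dot y$ can only vanish where $|y|>\sqrt{2}$, so between consecutive $\tau_j,\tau_{j+1}$ the solution crosses from one side of $\sqrt{2}$ to the other. The time spent with $|y|\ge 1$ in a single half-oscillation is bounded by a uniform constant (acceleration is bounded away from zero near $|y|=\sqrt{2}$, and speed is bounded below on $1\le|y|\le\sqrt{2}-\varepsilon$). The time spent crossing $|y|<1$ is estimated by
\[
\int_{-1}^{1}\frac{dy}{\sqrt{2(E-V(y))}}\;\lesssim\;-\log E,
\]
while the run from $|y|=1$ to the next turning point already dissipates at least $c/\tau_{j+1}$ energy, forcing $E\gtrsim 1/\tau_{j+1}$. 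Combining, $\tau_{j+1}-\tau_j\lesssim\log\tau_{j+1}$, hence $\tau_j\lesssim j\log j$ and $\sum 1/\tau_j=\infty$, the desired contradiction. Your $\omega$-limit framework would still need this (or an equivalent) quantitative estimate; the autonomous-invariance step alone does not bound how long each shadowed loop takes, which is precisely the issue you anticipated.
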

\begin{proof}
Since $E(t)$ is monotonically decreasing, the limit $\lim_{t\to\infty}E(t)$ exists as a real number, which is either  negative or non-negative. In the former case, there must be a sequence $t_j$ so that $(y(t_j), \dot y(t_j)) \to (\pm 1, 0)$ by Lemma~\ref{lemma:convergent_timestamps}. Monotonicity of the energy then implies that $E(t) = E(y_b(t), \dot y_b(t))$ tends toward the global minimum value of the potential energy, which means that $(y_b(t), \dot y_b(t)) \to (\pm 1, 0)$. 

If the limit is non-negative, then  Lemma~\ref{lemma:convergent_timestamps} implies that $E(t) \to 0$ as $t \to \infty$. Suppose $y(t)$ does not converge to $0$. Let $\tau_j$ denote the $j$-th time at which $\dot y(\tau_j) = 0$, and if $y(t)$ does not tend to $0$, then $\{\tau_j\}$ is an infinite sequence. We will show that this leads to a contradiction since too much energy will be lost in each oscillation. To do so, we first upper bound $\tau_{j + 1} - \tau_{j}$. Assume without loss of generality that $\dot y > 0$ between $\tau_{j}$ and $\tau_{j + 1}$. We have $V(y) \le - \frac{1}{4} y^2$ for $y \in (-1, 1)$. Let $\tau_j < t_1 < t_2 < \tau_{j + 1}$ so that $y(t_1) = -1$ and $y(t_2) = 1$. In particular, the portion of the trajectory between $t_1$ and $t_2$ is the part of the trajectory going over the hill in the potential, which should be the most time-consuming part of the trajectory, and indeed,
\begin{align}
    \int_{t_1}^{t_2} 1\, dt &= \int_{-1}^1 \frac{1}{y'}\, dy \nonumber     = \int_{-1}^1 \frac{1}{\sqrt{2(E(y) - V(y))}}\, dy \nonumber \\
    &\le 2 \int_{0}^1 \frac{1}{\sqrt{2E(t_2) + y^2/2}}\, dy \nonumber \\
    &\le 2\int_0^{\sqrt{2E(t_2)}} \frac{1}{\sqrt{2E(t_2)}}\, dy + 2\sqrt{2} \int_{\sqrt{2E(t_2)}}^1 \frac{1}{ y}\, dy \nonumber \\
    &\lesssim - \log E(t_2), \label{eq:time_near_zero}
\end{align}
assuming that $\tau_j$ is sufficiently large so that $0<E(t_2)\ll 1$. Note that from the  energy, $\dot y(t)$ can only reverse sign if $|y(t)|> \sqrt{2}$. Since the energy is always positive,
\[E(t_2) \ge \int_{t_2}^{\tau_{j + 1}} \frac{2 \dot y(t)^2}{t} \, dt \ge \frac{1}{\tau_{j + 1}} \int_1^{\sqrt{2}} 2\sqrt{2(E(y) - V(y))}\, dy \gtrsim \frac{1}{\tau_{j + 1}}.\]
Substituting this into \eqref{eq:time_near_zero}, we find that
\begin{equation}\label{eq:to_be_improved}
t_2 - t_1 \lesssim \log \tau_{j + 1}
\end{equation}
Finally, we show that for small energy, the time spent by $y(t)$ in one oscillation outside the interval $(-1, 1)$ is uniformly bounded by some constant. Fix some $0 < \varepsilon \ll \sqrt{2} - 1$ so that $|f(y)| \ge \alpha > 0$ for all $y \in B_\varepsilon(\pm \sqrt{2})$, the $\varepsilon$-neighborhoods of $\pm\sqrt{2}$. Then there exists a sufficiently large $T$ so that $\sqrt{2} < |y(\tau_j)| < \sqrt{2} + \varepsilon$ for all $\tau_j > T$ and that $|2\dot y(t)| / t < \alpha/2$ for all $t > T$. This means that if $\tau_j > T$ and $\dot y(t) > 0$ for $t \in (\tau_j, \tau_{j + 1})$, then 
\begin{align*}
    &\ddot y(t) = -f(y(t)) - \frac{2}{t} \dot y(t) \ge \frac{\alpha}{2} &\text{when } y(t) \in B_\varepsilon(- \sqrt{2}), \\
    &\ddot y(t) = -f(y(t)) - \frac{2}{t} \dot y(t) \le -\alpha &\text{when } y(t) \in B_\varepsilon(+\sqrt{2}).
\end{align*}
In other words, between $\tau_j$ and $\tau_{j + 1}$, the initial acceleration and final deceleration are both uniformly lower bounded. Then there is a uniform constant bounding the time spent by the part of the trajectory in $B_\varepsilon(\pm \sqrt{2})$. Outside both $B_\varepsilon(\pm \sqrt{2})$ and the interval $(-1, 1)$, the velocity is uniformly lower bounded, so there is a uniform constant bounding the time in that region as well.

Therefore, (\ref{eq:to_be_improved}) can in fact be improved to $\tau_{j + 1} - \tau_{j} \lesssim \log \tau_{j + 1}$, so $\tau_{j} \lesssim j \log \tau_j$. One first reads off $\tau_j \lesssim j^2$, and then applies this inequality once more to conclude 
\[\tau_j \lesssim j \log j.\]
The cummulative loss in energy starting from some sufficiently large time  $\tau_N$ is  therefore
\[\int_{\tau_N}^\infty \frac{\dot y^2(t)}{t}\, dt \ge \sum_{j = N}^\infty \frac{1}{\tau_{j + 1}} \int_{\tau_j}^{\tau_{j + 1}} \dot y^2(t)\, dt \gtrsim \sum_{j = N}^\infty \frac{1}{j \log j},\]
which is not finite, a contradiction.
\end{proof}

\subsection{Passing over the saddle}
\label{sec:passing_over_hill}

We now turn to a lemma which establishes the following natural property: consider the value $0<y(T)= \varepsilon \ll 1$  of a bound state solution with $T$ so large that $y(t)>0$ for all $t>T$. Then any other $y_b$ with $y_b(T)\in (0,\varepsilon)$ and $\dot y_b(T)<\dot y(T)$ needs to cross $0$ after time $T$. For simplicity, we prove the lemma for $f(y) = y^3 - y$, but it is easy to see that it works for many nonlinearities via the same argument.

\begin{lemma}\label{lem:above_does_cross}
Suppose $b^* \in (0, \infty)$ is a bound state, and without loss of generality assume $y_{b^*}(t)$ approaches $0$ from the right. That is, $\dot{y}_{b^*}(t) < 0$ for all $t \geq T$, for some $T$. Then $y_{b^*}$ has no more zero crossings after time $T$, and increasing $T$ if necessary, we may assume $0 < y_{b^*}(T) \leq 1/\sqrt{3}$. If $y_b(t)$ is another solution with $0<y_b(T) < y_{b^*}(T)$ and $\dot{y}_b(T) < \dot{y}_{b^*}(T)$, then $y_b(t)$ has a zero crossing after time $T$.
\end{lemma}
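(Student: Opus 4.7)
The plan is to argue by contradiction: assume $y_b(t) > 0$ for all $t > T$ and derive a contradiction by comparing $y_b$ with $y_{b^*}$. Set $u(t) := y_{b^*}(t) - y_b(t)$; the hypotheses give $u(T) > 0$ and $\dot u(T) = \dot y_{b^*}(T) - \dot y_b(T) > 0$. Subtracting the ODE~\eqref{ODE:eq} for $y_b$ from that for $y_{b^*}$, written in the form $(t^2 \dot y)' = -t^2 f(y)$, yields
\[
(t^2 \dot u(t))' = t^2\bigl(f(y_b(t)) - f(y_{b^*}(t))\bigr).
\]
The crucial analytic input is the sign of the right-hand side: since $f'(y) = 3y^2 - 1 < 0$ for $|y| < 1/\sqrt{3}$, the nonlinearity $f$ is strictly decreasing on $[0,1/\sqrt{3}]$. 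The hypothesis $y_{b^*}(T) \le 1/\sqrt{3}$ is thus precisely what is needed to place both solutions in a regime where the \emph{lower} one feels a larger value of $f$ and is pushed down more strongly---the usual comparison intuition is reversed because we have dropped past the saddle of the potential.

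Next I would bootstrap this sign into a monotonicity statement for $u$. Enlarging $T$ if necessary---using that $\dot y_{b^*}(t) < 0$ eventually together with $y_{b^*}(t) \to 0$---I may assume that $y_{b^*}$ is strictly decreasing on $[T,\infty)$, so that $0 < y_{b^*}(t) \le y_{b^*}(T) \le 1/\sqrt{3}$ throughout. Let $I := \{t > T : u(s) > 0 \text{ for all } s \in [T,t]\}$. On the interior of $I$ the contradiction assumption $y_b > 0$ combined with $y_b < y_{b^*} \le 1/\sqrt{3}$ places us in the regime above, so $(t^2 \dot u)' > 0$, whence $t^2 \dot u(t) \ge T^2 \dot u(T) > 0$, and therefore $\dot u > 0$ on $I$. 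In particular $u$ is strictly increasing, so $u(t) \ge u(T) > 0$ on $I$. If $I = [T,t_1)$ were bounded, continuity would force $u(t_1) = 0$, contradicting $u \ge u(T) > 0$; hence $I = [T,\infty)$.

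The contradiction is now immediate: on the one hand $u(t) \ge u(T) > 0$ for all $t \ge T$, while on the other hand $u(t) = y_{b^*}(t) - y_b(t) < y_{b^*}(t) \to 0$ as $t \to \infty$ (using $y_b > 0$). The main obstacle I anticipate is the bootstrap step---specifically, justifying that $y_{b^*}(t) \in [0, 1/\sqrt{3}]$ for all $t > T$ rather than only at $t = T$. This relies on the eventual monotone decay of $y_{b^*}$ coming from the linearization $\ddot y + (2/t)\dot y - y = 0$ at the equilibrium $y = 0$, whose bounded solutions behave like $e^{-t}/t$; this is what justifies taking ``$T$ large enough'' in the statement of the lemma and is the only place where more than a few lines of analysis are really required.
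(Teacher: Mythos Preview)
Your argument is correct and essentially identical to the paper's: both subtract the two ODEs, exploit that $f$ is strictly decreasing on $(0,1/\sqrt3)$, and derive a contradiction---the paper phrases it as a first-maximum argument for $s=y_{b^*}-y_b$ (at a maximum $\dot s=0$, $\ddot s\le 0$ forces $f(y_b)\le f(y_{b^*})$, contradicting $y_b<y_{b^*}$), while you phrase the same idea as a continuation/bootstrap showing $u$ stays positive and increasing. Regarding your worry about $y_{b^*}(t)\le 1/\sqrt3$ for all $t\ge T$: the paper glosses over this too (it only invokes $y_{b^*}(T)\le 1/\sqrt3$ at the contradiction step), and your proposed fix of ``enlarging $T$'' is not quite legitimate since the hypotheses on $y_b$ are pinned at the given $T$; the clean reading is that ``$T$ large enough'' in the statement is meant to include $\dot y_{b^*}<0$ on $[T,\infty)$, which is how the lemma is actually applied in the numerics.
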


\begin{proof}
Let $s(t) = y_{b^*}(t) - y_b(t)$. Then 
\begin{equation}\label{eq:difference}
    \ddot{s}(t) + \frac{2}{t} \dot {s}(t) = f(y_b(t)) - f(y_{b^*}(t)).
\end{equation}
At $t = T$, $s(T) > 0$ and $\dot s(T) > 0$. If $y_b(t)$ does not cross zero for any $t > T$, $y_b(t) \to 0$ or $1$. This means that $s(t) \to 0$ or $-1$. In either case, $s(t)$ must reach a maximum after $t = T$, so there exists a $t_* > T$ so that $\dot s(t_*) = 0$, $s(t_*) > 0$ and $\ddot s(t_*) \le 0$. Then by (\ref{eq:difference}), 
\begin{equation}\label{eq:diffcontradiction}
    f(y_b(t_*)) - f(y_{b^*}(t_*)) \le 0
\end{equation}
It is clear that $f(y)$ is strictly decreasing for $y \in (0, 1/\sqrt{3})$, so when $s(t_*) > 0$, (\ref{eq:diffcontradiction}) leads to a contradiction with the assumption $y_{b^*}(T) \le 1/\sqrt{3}$
\end{proof}
Note that the only property of $f$ we used is that $f'(0) < 0$, which holds for all nonlinearities associated with a double-well potential.


The next lemma provides a sufficient condition under which the trajectory will pass over the hill and be trapped in the following well. The underlying mechanism is the consumption of energy due to a necessary oscillation around the left well. If this amount exceeds the energy present at the pass over the saddle at $y=0$, then the remaining energy is negative, ensuring trapping. The lemma will ensure that if $y_{b^*}(t)$ is a bound state, then for initial values $b \in (b^*, b^* + \varepsilon)$ for some small $\epsilon$, $y_b(t)$ will necessarily fall into the following potential well.  

\begin{lemma}\label{lem:cross_over_get_trapped}
Suppose $y(t)$ is a solution of (\ref{ODE:eq}), \eqref{ODE:init_val} such that for some $T > 0$, 
\begin{align*}
    0 \leq y(T) &< \frac{1}{2}, \\
    \dot{y}(T) &< 0, \\
    0 < E(T) &< \frac{1}{4}, \\
    E(T)\left(T - 2\ln E(T) + \frac{3}{2}\right) &< \frac{3}{8}.
\end{align*}
Then if $y(t)$ has a zero after (or at) time $T$, it must proceed to fall into the left well. That is, $y(t) \to -1$, and $y(t)$ has no further zero crossings. 
\end{lemma}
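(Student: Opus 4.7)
The plan is to exhibit a time $t^\star$ at which $y(t^\star) < 0$ and $E(t^\star) < 0$: once this is done, trapping is automatic, since $E$ is strictly decreasing and the barrier $V(0) = 0$ becomes impassable (a later zero crossing would require $\dot y^2 = 2(E - V) \ge 0$ together with $V(y) = 0 > E$), and Lemma~\ref{lemma:limit_behav} then forces $y(t) \to -1$ with no further crossings. To locate $t^\star$, I would let $\tau_1 \ge T$ be the first zero of $y$ and $\tau_2 > \tau_1$ the first subsequent turning point of $y$. A short preliminary argument shows $\dot y < 0$ strictly on $[T, \tau_2)$: any turning point $t' \in (T, \tau_1)$ with $y(t') \in (0, 1/2)$ would give $E(t') = V(y(t')) < 0$, trapping the trajectory in the right well and contradicting the assumption that $y$ crosses zero. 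Consequently $y$ is monotone decreasing on $[T, \tau_2]$ from $y(T) \in [0, 1/2)$ to $y(\tau_2) < 0$, with $E(\tau_2) = V(y(\tau_2))$. If $y(\tau_2) \in (-\sqrt 2, 0)$, set $t^\star = \tau_2$ and we are done. The opposite case $y(\tau_2) \le -\sqrt 2$ will be excluded by contradiction: set $\tau' \in (\tau_1, \tau_2]$ to be the first time $y(\tau') = -\sqrt 2$.

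\textbf{Time estimate $\tau' \le T - 2\ln E(T) + 3/2$.} I split the descent from $y(T)$ down to $-\sqrt 2$ into a hilltop region $|y| \le 1/2$ and a tail region $|y| \in [1/2, \sqrt 2]$. Under the contradiction assumption $E(\tau') \ge V(-\sqrt 2) = 0$, monotonicity of $E$ gives $E(t) \ge 0$ on $[T, \tau']$. On the hilltop, $-2V(y) = y^2 - y^4/2 \ge y^2/2$ (valid for $|y| \le 1$), so $\dot y^2 \ge 2E(t) + y^2/2$; a hyperbolic substitution $y = 2\sqrt{E(t)}\sinh u$ produces a contribution at most $-2\ln E(T) + c_1$, after a short bootstrap replacing the time-dependent $E(t)$ by $E(T)$ using that the dissipation integral along the short traversal is itself small. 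On the tail, $|\dot y|$ is bounded below by a positive absolute constant from energy conservation, contributing only $c_2 = O(1)$. The constants are arranged so that $c_1 + c_2 = 3/2$.

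\textbf{Energy loss $E(T) - E(\tau') \ge 4/(3\tau')$.} The dissipation identity gives
\begin{equation*}
    E(T) - E(\tau') = \int_T^{\tau'} \frac{2\dot y^2}{t}\, dt \ge \frac{2}{\tau'}\int_T^{\tau'} \dot y^2\, dt.
\end{equation*}
Converting to an integral over $y$ via the monotone change of variables and using the pointwise bound $|\dot y(y)| = \sqrt{2(E - V)} \ge \sqrt{-2V(y)}$ (valid since $E \ge 0$),
\begin{equation*}
    \int_T^{\tau'} \dot y^2\, dt = \int_{-\sqrt 2}^{y(T)} |\dot y(y)|\, dy \ge \int_{-\sqrt 2}^{0} \sqrt{-2V(y)}\, dy = \int_{-\sqrt 2}^0 |y|\sqrt{1 - y^2/2}\, dy = \int_0^1 \sqrt u\, du = \frac{2}{3},
\end{equation*}
where the last step uses the substitution $u = 1 - y^2/2$.

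\textbf{Contradiction.} Combining the two bounds,
\begin{equation*}
    E(T) - E(\tau') \ge \frac{4/3}{\tau'} \ge \frac{4/3}{T - 2\ln E(T) + 3/2}.
\end{equation*}
The hypothesis $E(T)(T - 2\ln E(T) + 3/2) < 3/8$ rearranges to $(T - 2\ln E(T) + 3/2)^{-1} > (8/3) E(T)$, so $E(T) - E(\tau') > (32/9) E(T)$, giving $E(\tau') < -(23/9) E(T) < 0$. This contradicts $E(\tau') \ge 0$, so $y(\tau_2) \in (-\sqrt 2, 0)$ and the trapping argument finishes the proof.

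\textbf{Main obstacle.} The delicate ingredient is the time estimate, specifically pinning down the exact coefficient $-2$ in front of $\ln E(T)$ (as opposed to, say, $-\sqrt{2}$ coming from the naive harmonic comparison) and the additive constant $3/2$. The quartic correction in $-2V$, the transition between the hilltop and tail regimes, and the bootstrap needed to exchange $E(t)$ for $E(T)$ in the pointwise velocity bound all conspire to make this the only nontrivial accounting in the argument. The remaining steps---the closed-form integral in the energy-loss bound and the phase-plane trapping once $E$ is negative---are routine.
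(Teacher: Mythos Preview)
Your architecture matches the paper's: assume the trajectory reaches the far side of the well with nonnegative energy, bound the transit time from above, bound the dissipated energy from below, and force a contradiction with the hypothesis. Your energy--loss computation (the $4/(3\tau')$ bound) and the closing arithmetic are fine. The gap is in the time estimate.

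To upper--bound the transit time over the hilltop you need a \emph{lower} bound on $E(t)$ along the passage, since
\[
\tau' - T \;=\; \int \frac{dy}{|\dot y|} \;=\; \int \frac{dy}{\sqrt{2(E(t)-V(y))}}.
\]
But $E(t)\le E(T)$, so ``replacing $E(t)$ by $E(T)$'' goes the wrong way; the ``bootstrap'' you allude to is not explained and cannot be made to work in that direction. The paper handles this by introducing $\alpha := E(T_1)$, the energy at the moment $y$ first reaches $-1/2$; then $E(t)\ge \alpha$ throughout the hilltop crossing, giving $T_1 - T < -2\ln\alpha$. The trapping condition is first derived entirely in terms of $\alpha$, namely $\alpha\bigl(T - 2\ln\alpha + \tfrac32\bigr) < \tfrac38$, and only at the very end is $\alpha$ replaced by $E(T)$ via the monotonicity of $\alpha\mapsto \alpha\bigl(T - 2\ln\alpha + \tfrac32\bigr)$ on $(0,1/4)$. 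This monotonicity step is what you are missing.

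Two smaller points. First, the coefficient $-2$ in front of $\ln$ is not ``exact'' in any sense; the harmonic comparison really gives $-\sqrt{2}\ln\alpha + \text{const}$, and the paper simply over--bounds this by $-2\ln\alpha$ to absorb the constant. Your remark about ``pinning down the exact coefficient $-2$'' is a red herring. Second, your choice of terminal waypoint $y=-\sqrt{2}$ is less convenient than the paper's $y=-1$: at $-\sqrt{2}$ the potential vanishes, so $|\dot y|$ is not bounded below by an absolute constant there, and your tail contribution $c_2$ (while finite, as the integral $\int_{-\sqrt{2}}^{-1/2}|y|^{-1}(1-y^2/2)^{-1/2}\,dy$ converges) already exceeds $3/2$ on its own. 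The paper stops the time count at $y=-1$ (where $V=-1/4$ keeps the speed uniformly positive) and collects the dissipation only on the stretch $[-1,-1/2]$, which is what produces the clean constants $3/2$ and $3/8$.
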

\begin{proof}
Suppose $y(t)$ has another zero crossing, say the minimal time $t\geq T$ with this property is $t_0\geq T$. Then $\dot{y}(t_0)<0$ and there can be no reversal in the sign of $\dot{y}(t)$ until after $y(t)$ has passed~$-1$. So we  can define $T_1 > T$ to be the first time after $T$ at which $y(T_1) = -1/2$. 

Suppose $y(t)$ does not fall into the left well; in this situation, $E(T_1) = \alpha > 0$. Then we must have $E(T) > \alpha$. Let $t(s)$, $-1/2 < s < 1/2$, be the nearest time after/before $T$ such that $y(t(s)) = s$. Then  we have (recall $0<\alpha<\frac{1}{4}$)
\begin{align*}
    T_1 - T &< \int_{-1/2}^{1/2} \frac{1}{|\dot{y}(t(s))|}\, ds
    = \int_{-1/2}^{1/2} \frac{1}{\sqrt{2(E(t(s)) - V(s))}}\, ds \\
    &< \int_{-1/2}^{1/2} \frac{1}{\sqrt{2\alpha + s^2 - s^4/2}}\, ds     < \int_{-1/2}^{1/2} \frac{1}{\sqrt{2\alpha + s^2/2}}\, ds \\
    &< 2\int_{0}^{\sqrt{\alpha}} \frac{1}{\sqrt{2\alpha}} \, ds + 2\sqrt{2}\int_{\sqrt{\alpha}}^{1/2} \frac{1}{s}\, ds \\
    &= \sqrt{2} - 4\ln(2) - 2\ln(\alpha) < -2\ln(\alpha).
\end{align*}
Thus $T_1 < T - 2\ln(\alpha)$. Next, we observe that if the conditions of the lemma are satisfied, then more than $\alpha$ energy is lost in going from $y = -1/2$ to $y = -1$. Letting $t(s)$ be as before and assuming $y(t)$ does not fall into the left well, $E(t(s)) > 0$ for $-1 < s < -1/2$. Using $\frac{dE}{ds} = \frac{1}{\dot{y}}\frac{dE}{dt} = \frac{2|\dot{y}|}{t}$, we have
\begin{align*}
    \Delta E &= 2\int_{-1}^{-1/2} \frac{|\dot{y}(t(s))|}{t(s)}\, ds \\
    &= 2\int_{-1}^{-1/2} \frac{\sqrt{2(E(t(s)) - V(s))}}{t(s)}\, ds \\
    &> 2\int_{-1}^{-1/2}\frac{\sqrt{s^2 - s^4/2}}{t(s)}\, ds 
    > \frac{1}{T_2}\int_{-1}^{-1/2} |s|\, ds 
    = \frac{3}{8T_2}
\end{align*}
where $T_2 > T$ is the first time at which $y = -1$. Now we have
\begin{align*}
    T_2 - T_1 &= \int_{-1}^{-1/2} \frac{1}{|\dot{y}(t(s))|}\, ds \\
    &= \int_{-1}^{-1/2}\frac{1}{\sqrt{2(E(t(s)) - V(s))}}\, ds \\
    &< \int_{-1}^{-1/2}\frac{1}{\sqrt{y^2 - y^4/2}}\, dy \\
    &< 2\int_{-1}^{-1/2}\frac{1}{|y|}\, dy 
    < \frac{3}{2}.
\end{align*}
Altogether, we have
\begin{equation*}
    \Delta E > \frac{3}{8}\frac{1}{T - 2\ln(\alpha) + \frac{3}{2}},
\end{equation*}
and if $\Delta E > \alpha$, then $E(T_2) < 0$, and the particle falls into the left well. This occurs when
\begin{equation*}
    \alpha\left(T - 2\ln(\alpha) + \frac{3}{2}\right) < \frac{3}{8}
\end{equation*}
Because $\alpha < E(T)$ and $\alpha \ln(\alpha)$ is monotone decreasing for $0<\alpha < 1/4$, in the situation of the lemma, if 
\begin{equation*}
    TE(T) - 2E(T)\ln E(T) + \frac{3}{2}E(T) < \frac{3}{8}
\end{equation*}
then the particle must fall into the left well if it crosses zero after time $T$. 
\end{proof}

\section{Proof of Theorem~\ref{thm:first_excited_state_unique}}
\subsection{Outline of proof}

Theorem~\ref{thm:first_excited_state_unique} is proved by running a \texttt{C++} computer program which combines the rigorous numerics of \VNODELP{} with the analytical lemmas of the preceding section. This code is divided into two parts: a \textit{planning} section, and a \textit{proving} section. The planning section of the code creates a plan for proving the first several bound states are unique, and the proving section executes this plan and outputs a rigorous proof of uniqueness. Separating these two sections is advantageous because only the proving section must be mathematically rigorous, so only that part of the code needs to be checked for correctness. The planning section can be modified without fear of compromising the rigour of the code. 

In what follows we treat \VNODELP{} as a black box that takes in an input interval $\mb{b} = (b_1, b_2)$ and a time interval $\mb{t} = (t_1, t_2)$, and outputs an interval $\mb y_{\mb b}(\mb t) = (y_1, y_2) \times (\dot y_1, \dot y_2) \times (\delta_1, \delta_2) \times (\dot \delta_1, \dot \delta_2)$ which contains $y_b(t)$ for any $b \in (b_1, b_2)$ and $t \in (t_1, t_2)$. We can also integrate the equation at infinity (\ref{ODEv:eq}) rigorously. To implement this functionality, we use the explicit error bounds given in Lemma \ref{lem:Z Z2} to move past the singularity at $t = 0$. For instance, we may pick $\mb t_0 = (0.1, 0.101)$ and then use those error bounds to find $\mb y_0$ a vector of four intervals which contain any $y_b(t)$, $b \in (b_1, b_2)$, and $t \in (0.1, 0.101)$. At this point we may input the starting intervals $\mb y_0, \mb t_0$ directly into \VNODELP{}, which rigorously integrates to the desired ending time. See Figure~\ref{fig:pos_vnode_intervals} for a depiction of \VNODELP{} integration with solution intervals.

\begin{figure}
    \centering
    \includegraphics[width=0.8\textwidth,trim={0 0 0 0.7cm}, clip]{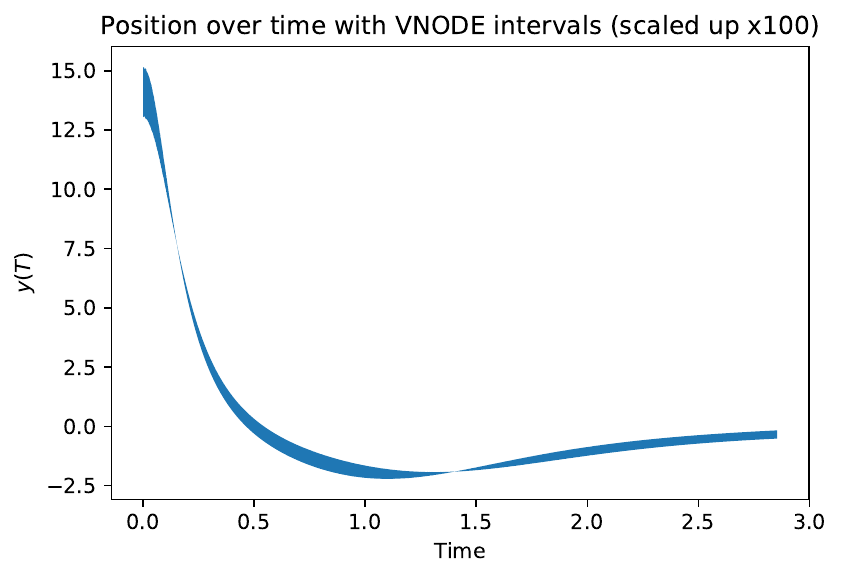}
    \caption{\VNODELP{} numerical integration with solution intervals scaled up $\times 100$. The ``pinch points'' of near zero $y$-uncertainty occur when $\delta = y_b(t) \sim 0$, and although it is not shown here, the $\dot y$-uncertainty is larger at these points.}
    \label{fig:pos_vnode_intervals}
\end{figure}

We now describe our procedure for the ground state and first excited state, before describing the planning and proving sections in detail. Bound states can only occur in the range $b \in (\sqrt{2}, \infty)$. To prove the ground state is unique, we split this range into four intersecting intervals, $I_1, I_2, I_3, I_4$. For instance, we can take:
\begin{equation*}
    I_1 = (1.4, 4.26),\ I_2 = (4.25, 4.43),\ I_3 = (4.42, 6.32),\ I_4 = (6.31, \infty).
\end{equation*}
Now, numerical evidence shows that the ground state occurs in the range $I_2$. So, in the range $I_1$, the solution will eventually fall into the right energy well. We use \VNODELP{} to prove this by splitting $I_1$ into smaller chunks, and verifying that in each of these chunks the energy of the solution eventually falls below zero. We deal with the range $I_3$ in the same way, by showing that for all $b\in I_3$, $y_b(t)$ eventually has negative energy. In the interval $I_4$, the solution should always have at least one zero crossing. We prove this using the equation at infinity (\ref{eq:ODE w sys}) as discussed in \S\ref{sec:eq_at_infty}. The infinite range $b \in I_4$ corresponds to the finite range $\beta \in (0, 0.16)$, so by splitting this range into small chunks and verifying that in these chunks $w(t)$ is eventually negative, we prove that $y_b(t)$ eventually crosses zero for all $b \in I_4$. Notice that we could have replaced the interval $I_4$ by $I_3 \cup I_4$, and it would still be true that in this range there is at least one zero crossing. We handle with $I_3$ separately because the numerics of ODE (\ref{eq:ODE w sys}) are delicate near bound states, so $I_3$ acts as a \textit{buffer interval}. 

The only range left is $I_2$, which actually contains the ground state. We must show that $I_2$ contains at most one ground state, and that it contains no first or higher excited states. To this end, we use Lemmas \ref{lem:above_does_cross} and \ref{lem:cross_over_get_trapped} respectively. At some time $T > 0$, say $T = 6$, any $y_b(T)$ for $b \in I_2$ will be positive, moving in the negative direction, and small in magnitude. We use \VNODELP{} to prove that $\delta_b(T), \dot{\delta}_b(T) < 0$ for $b \in I_2$. Let $b_0 \in I_2$ be a ground state. Then for any $b > b_0$, the mean value theorem implies that $y_b(T)< y_{b_0}(T)$ and $\dot y_b(T) < \dot y_{b_0}(T)$. By Lemma \ref{lem:above_does_cross}, $y_b(t)$ must cross zero. It follows that there is at most one ground state in $I_2$. Next, we check that the conditions of Lemma \ref{lem:cross_over_get_trapped} are satisfied for all $y_b(T)$, $b \in I_2$. This implies that if any solution $y_b(t)$ does cross zero, it must fall into the left energy well, and cannot be a higher excited state. Altogether this shows that there is at most one ground state for the ODE (\ref{eq:ODE sys}), as desired. It also follows from this analysis that the ground state exists, so we have successfully shown the ground state exists and is unique. 

We note a subtlety in this argument. A pathological issue would be that by time $T = 6$, $y_b(T)$ crossed all the way to the left well, come back to the right well, and then started to approach $y = 0$ from the right. This would kill our later attempt to prove that the second excited state is unique, because we would miss a second excited state in $I_2$. To deal with this, we use energy considerations to bound $|\dot y_b(t)|$, and we make small enough time steps with \VNODELP{} so that the solution cannot cross zero twice in between time steps. Then, we can be sure that the number of zero crossings observed by \VNODELP{} up to some time $T$ is the actual number of zero crossings for all solutions in our initial interval $\mathbf{b}$, up to time $T$. 

To prove the ground state is unique, we split the range $(\sqrt{2}, \infty)$ into subintervals to which we applied three different proof methods. The method for $I_1$ and $I_3$ was \FALL{}: we proved that the solution eventually has negative energy and thus cannot be a bound state. The method for $I_4$ was \INFTYCROSSESMANY{}, we used the equation at infinity to show that there are sufficiently many zero crossings and thus no ground states. The method for $I_2$ was \BOUNDSTATEGOOD{}, we used the analytical Lemmas \ref{lem:above_does_cross}, \ref{lem:cross_over_get_trapped} to show that there was at most one ground state and no other bound states. These are the same methods we use to deal with higher excited states. We have used the same notation here as is used in the code, for ease of verifying that the code follows the mathematical argument.

Let us extend our procedure to prove the first excited state is unique. We split up $(\sqrt{2}, \infty)$ into six pieces:
\begin{align*}
    I_1 = (1.40, 4.26),\ I_2 = (4.25, 4.43),\ I_3 = (4.42, 14.10),\\
    I_4 = (14.09, 14.12),\ I_5 = (14.11, 16.11),\ I_6 = (16.10, \infty).
\end{align*}
We apply the \FALL{} method to $I_1$, $I_3$, $I_5$, we apply the \BOUNDSTATEGOOD{} method to $I_2$, $I_4$, and we apply the \INFTYCROSSESMANY{} method to $I_6$. For the interval $I_4$, our careful stepping procedure as described above lets us find a time $T > 0$, for example $T = 8$, such that $y_b(T)$ crosses zero exactly once by time $T$ for all $b \in I_4$. We can also verify that $\dot y_b(T) > 0$, $\delta_b(T) > 0$, $\dot \delta_b(T) > 0$, so that the conditions of Lemma \ref{lem:above_does_cross} are satisfied and there is at most one first excited state in the interval $I_4$. Next we verify that the conditions of Lemma \ref{lem:cross_over_get_trapped} are satisfied uniformly for $b \in I_4$ at time $T$, so that there are no second or higher excited states in $I_4$. Altogether this shows that the first excited state exists and is unique, and sets us up to prove subsequent excited states are unique as well.

\subsection{Planning section}
We now describe the planning section of the code. Given a value $N \geq 0$, this section outputs a list of intervals $I_1, I_2, \ldots, I_k$, along with which method is to be used in each interval. The proving section will use this plan to verify that all bound states up to the $N^{th}$ (that is, all bound states with $\leq N$ zero crossings, or in other words the first $N$ excited states) are unique. 

Let $b_0, b_1, \ldots, b_N$ denote the locations of the first $N$ excited states (assuming for now that they are unique). We find their locations numerically with a binary search. To find $b_k$, we keep track of a lower bound $l < b_k$ and an upper bound $u > b_k$, and at each iteration, check how many times $y_m(t)$ crosses zero, $m = (l+u)/2$. If $y_m(t)$ crosses zero more than $k$ times, we set $u := m$, and if not we set $l := m$. We iterate until we have a small enough interval $(l, m)$ containing $b_k$.  

Next, we find small enough intervals around each bound state so that the \BOUNDSTATEGOOD{} method can run successfully for each bound state. We start with a large interval around $b_k$, width $~0.5$, and then keep on dividing the width by two until \BOUNDSTATEGOOD{} succeeds. 

Third, we fill in the space between the bound states with \FALL{} intervals. We include a buffer interval above the last bound state so as to make \INFTYCROSSESMANY{} run faster.

Finally, we create an interval $\pmb{\beta} = (0, \beta)$ corresponding to the infinite interval $(1/\beta, \infty)$ where we will show the ODE crosses zero at least $N+1$ times. 
\subsection{Proving section}
The proving section receives a list of intervals and methods from the planning section, and outputs a rigorous proof that the first $N$ excited states are unique. The first step is to verify that subsequent intervals intersect each other, so that every real number in the range $(\sqrt{2}, \infty)$ is covered by some interval. Next, the different methods are implemented as follows. 

The \FALL{} method receives an interval, e.g. $(1.4, 4.2)$, and must prove that for all $b$ in that interval $y_b(t)$ eventually has negative energy. It begins by attempting to integrate with that potentially very large input interval for $b$. Of course, \VNODELP{} will likely fail to integrate with such a large input interval. If this happens, we bisect the interval into two halves, an upper and lower half, and recursively apply the \FALL{} method to each half. Once the starting intervals are small enough, \VNODELP{} will successfully integrate and prove that the energy is eventually negative. This bisection method allows us to use larger intervals away from the bound states and smaller intervals closer to the bound states, where the computations are more delicate.

The \BOUNDSTATEGOOD{} method receives an interval $I$ which supposedly contains an $n$th bound state. It must prove that there is at most one $n$th bound state, no lower bound states, and no higher bound states in $I$. We use a careful stepping procedure to find some time $T > 0$ such that for all $b \in I$, $y_b(t)$ crosses zero exactly $n$ times by time $T$. This already shows that $I$ doesn't contain any lower bound states. Increasing $T$ if necessary, we also verify that $\dot y_b(T)$, $\delta_b(T)$, and $\dot \delta_b(T)$ all have the opposite sign as $y_b(T)$ uniformly in $I$. As discussed earlier, Lemma \ref{lem:above_does_cross} then implies that there is at most one $n$th bound state in $I$. Finally, we verify that the conditions in Lemma \ref{lem:cross_over_get_trapped} apply at time $T$, so there are no higher bound states in $I$. Throughout we use interval arithmetic, never floating point arithmetic.

The \INFTYCROSSESMANY{} method works similarly to the \FALL{} method. We bisect the interval $(0, \beta)$ into smaller pieces, and in each of these small pieces we prove that $w(t)$ has at least $N+1$ crossings.

Altogether, these methods show that if it exists, the $n$th bound state (counting from $n = 0$) must be unique and lie in the $n$th \BOUNDSTATEGOOD{} interval. This proves that all bound states up to the $N^{th}$ are unique, as desired. In fact, the code may also be used to show these bound states exist by counting crossing numbers, but this is already known by synthetic methods \cite{HMcL}. 

\section{Using \VNODELP{} and the data} 
\label{sec:numerics}
The code and full output logs from the proof procedure can be found at \url{https://github.com/alexander-cohen/NLKG-Uniqueness-Prover}, the most recent commit at the time of writing is 9cf63c06ca1838e64dd35fe11ca4fdfd45591714.
The code is contained in the single \texttt{C++} file ``nlkg\_uniqueness\_prover.cc'', and output logs are titled ``uniqueness\_output\_N=*.txt''. The code proved the first 20 excited states are unique in $\sim4$h running on a MacBook Pro 2017, 2.5 GHz. Time is the main limiting factor to proving uniqueness of more excited states.
We summarize the output of the proof for $N = 3$ excited states. Intervals are rounded for space, in actuality they have a nonempty intersection. See Figure~\ref{fig:pf_method_graph} for a visual representation of the same information.

\begin{center}
\noindent
\begin{longtable}[t]{r|c|l}
   Interval & Method & Details \\ 
    {[1.414, 4.266]} & \FALL{} & \\[0.5em]
    {[4.266, 4.433]} & \BOUNDSTATEGOOD{} & 
    \begin{tabular}[t]{l}
        Bound state 0, used $T=1.921$ \\ 
        $y(T) \in [0.127, 0.277]$ \\ 
        $\dot y(T) \in [-0.342, -0.283]$ \\ 
        $\delta \in [-0.374, -0.269]$ \\ 
        $\dot \delta \in [-0.139, -0.049]$
    \end{tabular}
    \\[0.5em]
    {[4.433, 14.095]} & \FALL{} & \\[0.5em]
    {[14.085, 14.115]} & \BOUNDSTATEGOOD{} & 
    \begin{tabular}[t]{l}
        Bound state 1, used $T=2.855$ \\ 
        $y(T) \in -0.3[40, 43]$ \\ 
        $\dot y(T) \in 0.452[46,55]$ \\ 
        $\delta \in 0.15[59,63]$ \\ 
        $\dot \delta \in 0.00[05,12]$
    \end{tabular}\\[0.5em]
    {[14.115, 29.090]} & \FALL{} & \\[0.5em]
    {[29.090, 29.174]} & \BOUNDSTATEGOOD{} & 
    \begin{tabular}[t]{l}
        Bound state 2, used $T=4.970$ \\ 
        $y(T) \in 0.1[05,29]$ \\ 
        $\dot y(T) \in -0.1[34,46]$ \\ 
        $\delta \in -0.1[18,26]$ \\ 
        $\dot \delta \in -0.0[57, 65]$
    \end{tabular}\\[0.5em]
    {[29.174, 49.339]} & \FALL{} & \\[0.5em]
    {[49.339, 49.381]} & \BOUNDSTATEGOOD{} & 
    \begin{tabular}[t]{l}
        Bound state 3, used $T=5.908$ \\ 
        $y(T) \in -0.1[68, 76]$ \\ 
        $\dot y(T) \in [0.198, 0.202]$ \\ 
        $\delta \in 0.07[31,55]$ \\ 
        $\dot \delta \in 0.01[72,91]$
    \end{tabular}\\[0.5em]
    {[49.381, 51.381]} & \FALL{} & \\[0.5em]
    {[0.000, 0.019]} & \INFTYCROSSESMANY{} & 
\end{longtable}
\end{center}

\begin{figure}[H]
    \centering
    \includegraphics[width=0.9\textwidth, trim={0 0  0 1.2cm}, clip]{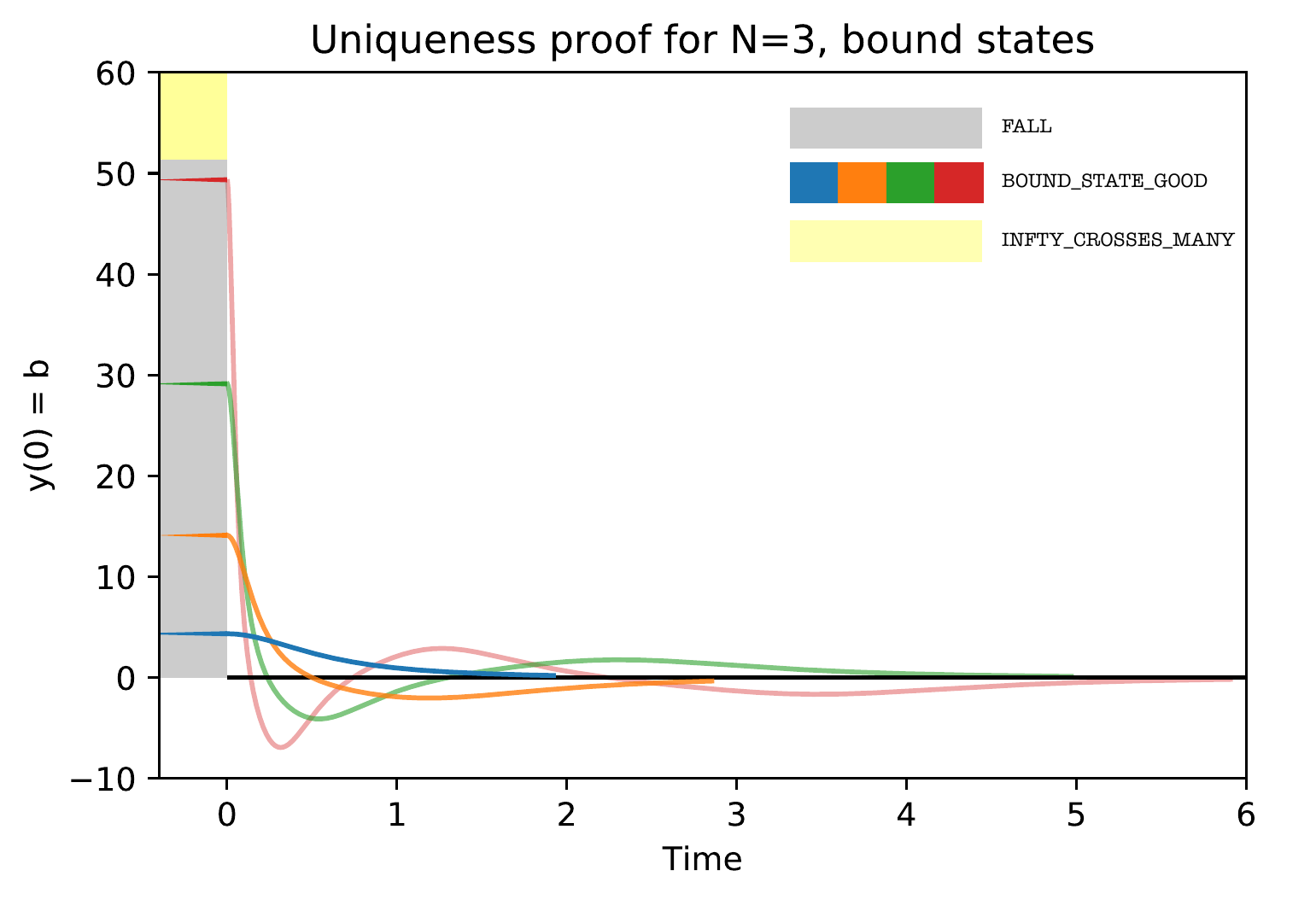}
    \caption{Graph showing first three excited states, and how the $b$-axis is partitioned by different proof methods.}
    \label{fig:pf_method_graph}
\end{figure}

\printbibliography

\typeout{get arXiv to do 4 passes: Label(s) may have changed. Rerun}
\end{document}